\newcommand{\PP}{\ensuremath{\mathbb{P}}}
\newcommand{\ZZ}{\ensuremath{\mathbb{Z}}}
\def\hol{{\mathcal{O}}}
\DeclareMathOperator{\Aut}{Aut}
\theoremstyle{plain}
\newtheorem{teo}{Theorem}[section]{\bf}{\it}
\newtheorem{conj}[teo]{Conjecture}{\bf}{\it}
\newtheorem{prop}[teo]{Proposition}{\bf}{\it}
\newtheorem{df}[teo]{Definition}{\bf}{\it}
\newtheorem{lem}[teo]{Lemma}{\bf}{\it}
\newtheorem{cor}[teo]{Corollary}{\bf}{\it}
\theoremstyle{definition}
{\bf}{\rm}
{\bf}{\rm}
{\bf}{\rm}
\newtheorem{ex}[teo]{Example}{\bf}{\rm}
{\bf}{\rm}
\newtheorem{oss}[teo]{Remark}{\bf}{\rm}
{\bf}{\rm}
\newtheorem{nota}[teo]{Notations}{\bf}{\rm}
\makeatletter\@addtoreset{equation}{section}\makeatother
\makeatletter\@addtoreset{subsection}{equation}\makeatother
\newcommand\qmatrix[2][1]{\left(\renewcommand\arraystretch{#1}
\begin{array}{*{20}r}#2\end{array}\right)}
\newcommand{\p}{\mathbb{P}}
\let \PP \p
\newcommand{\na}{\mathbb{N}}
\def\AA{{\mathbb A}}
\newcommand{\aut}{\text{Aut}}
\def\SAut{\mathop{\rm SAut}}
\let \saut \SAut
\def\Stab{\mathop{\rm Stab}}
\def\rk{\mathop{\rm rk}}
\def\cone{\mathop{\rm cone}}
\def\Pic{\mathop{\rm Pic}}
\newcommand{\fie}{\mathbf{k}}
\let \kk \fie
\newcommand{\summ}{\displaystyle\sum}
\newcommand{\fra}{\displaystyle\frac}
\newcommand{\km}[1]{(\kk^2 \setminus \{0\})/(\ZZ/#1 \ZZ)}
\newcommand{\TX}{\mathfrak T_X}
\begin{document}
\sloppy

\title[Unirationality and existence of infinitely transitive models]{Unirationality and existence of infinitely transitive models}

\author[F.~Bogomolov, I.~Karzhemanov, K.~Kuyumzhiyan]{Fedor Bogomolov\lowercase{$\,^{a,b}$}, Ilya Karzhemanov\lowercase{$\,^{a}$}, Karine Kuyumzhiyan\lowercase{$\,^{b}$}}
\address{$^{a}\,$Courant Institute of Mathematical Sciences, N.Y.U. \\
 251 Mercer str. \\
 New York, NY 10012, US;}
 \address {$^{b}\,$National Research University Higher School of Economics \\
GU-HSE,7. Vavilova Str. Moscow 117 312, Russia}
\email{bogomolo@cims.nyu.edu} 
\email{karzhema@cims.nyu.edu} 
\email{karina@mccme.ru}

\thanks{The first author was supported by NSF grant DMS-1001662
and by AG Laboratory HSE, RF government
grant, ag.\,11.G34.31.0023. 
The third author
was supported by AG Laboratory HSE, RF government
grant, ag.\,11.G34.31.0023, by the ``EADS Foundation Chair in Mathematics", Russian-French Poncelet Laboratory (UMI 2615 of CNRS), and Dmitry Zimin fund ``Dynasty". }

\begin{abstract}
We study unirational algebraic varieties and the fields of
rational functions on them. We show that after adding a finite
number of variables some of these fields admit an \emph{infinitely
transitive model}. The latter is an algebraic variety with the
given field of rational functions and an infinitely transitive
regular action of a group of algebraic automorphisms generated by unipotent
algebraic subgroups. We expect that this property holds for all
unirational varieties and in fact is a peculiar one for this class
of algebraic varieties among those varieties which are rationally
connected.
\end{abstract}

\keywords
{Unirationality, algebraic group, infinitely transitive action}

\subjclass[2010]{14M20, 14M17, 14R20}

\maketitle

\section{Introduction}
\label{section:intro}

This article aims to relate unirationality of a given algebraic variety with the property of being
a homogeneous space with respect to unipotent algebraic group action. More precisely,
let $X$ be an algebraic variety defined over a field~$\fie$, and $\aut(X)$ be the group of regular
automorphisms of $X$. 
Let also $\saut(X) \subseteq \aut(X)$ be the subgroup
generated by algebraic groups isomorphic to the additive group $\mathbb{G}_a$.

\begin{df}[cf. \cite{AFKKZ}]
\label{definition:def-1}
We call variety~$X$ \emph{infinitely transitive}
if for any $k\in\na$ and any two collections of points $\{P_1,\ldots,P_k\}$ and $\{Q_1,\ldots,Q_k\}$
on $X$ there exists an element $g\in \saut(X)$ such
that $g(P_i)=Q_i$ for all $i$. Similarly, we call~$X$ \emph{stably infinitely transitive} if $X \times \fie^m$ is infinitely transitive for some~$m$.
\end{df}

Recall that in Birational Geometry adding a number $m$ of
algebraically independent variables to the function field
$\fie(X)$ is referred to as \emph{stabilization}. Geometrically
this precisely corresponds to taking the product $X \times\fie^m$
with the affine space. Note also that if $X$ is infinitely
transitive, then it is unirational, i.e., $\fie(X) \subseteq
\fie(y_1,....y_m)$ for some $\fie$-transcendental elements
$y_i$ (see \cite[Proposition 5.1]{AFKKZ}). This suggests to regard
(stable) infinite transitivity as a birational property of $X$ (in
particular, we will usually assume the test variety $X$ to be
smooth and projective):

\begin{df}
\label{definition:def-2}
We call variety $X$ \emph{stably b-infinitely transitive} if the field $\fie(X)(y_1,....y_m)$
admits an infinitely transitive model (not necessarily smooth or projective) for some $m$ and $\fie(X)$-transcendental elements $y_i$.
If $m=0$, we call $X$ \emph{b-infinitely transitive}.
\end{df}

\begin{ex}
\label{example:in-tr-rat}
The affine space $X:=\fie^{\dim X}$ is stably infinitely transitive (and infinitely transitive when $\dim X \geq 2$), see~\cite{KZ}.
More generally, any rational variety is stably b-infinitely transitive, and it is b-infinitely transitive if the dimension $\geq 2$.
\end{ex}

Example~\ref{example:in-tr-rat} suggests that being stably b-infinitely transitive does not give anything interesting for rational varieties.
In the present article, we put forward the following:

\begin{conj}
\label{conj:c-main}
Any unirational variety $X$ is stably b-infinitely transitive.
\end{conj}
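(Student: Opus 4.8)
We sketch a possible line of attack on Conjecture~\ref{conj:c-main}. The plan is to reduce it to the construction of a \emph{flexible} affine model and then to invoke the criterion of Arzhantsev--Flenner--Kaliman--Kuyumzhiyan--Zaidenberg: an affine variety $Y$ over an algebraically closed field is flexible if at every smooth point the velocity vectors of the $\mathbb{G}_a$-actions on $Y$ span the tangent space, and by \cite{AFKKZ} a flexible affine variety of dimension $\geq 2$ has $\saut(Y)$ acting infinitely transitively (on all of $Y$ when $Y$ is smooth). Since b-infinite transitivity does not depend on the chosen model, it suffices to prove: for every unirational $X$ there are an $m \in \na$ and a smooth flexible affine variety $Y$ with $\fie(Y) \cong \fie(X)(y_1,\ldots,y_m)$. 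One should note from the outset that one cannot route this through stable rationality --- $X \times \fie^N$ need not be rational --- so any argument must genuinely use that infinite transitivity is weaker than rationality.

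First I would normalise the unirational parametrisation. Restricting a dominant rational map $\AA^n \dashrightarrow X$ to a general linear subspace $\AA^{\dim X} \subseteq \AA^n$ keeps it dominant, so we may assume we are given a generically finite dominant map $\pi\colon \AA^r \dashrightarrow X$ with $r = \dim X$ and some degree $e$; if $e = 1$ then $X$ is rational and we are done by Example~\ref{example:in-tr-rat}. The substance is the case $e \geq 2$, where we are handed a degree-$e$ extension $\fie(X) \subseteq \fie(x_1,\ldots,x_r)$ and must build the required model of $\fie(X)(y_1,\ldots,y_m)$ out of it. I would spread $\pi$ out to an \'{e}tale morphism $U \to V$ of smooth affine opens, $V \subseteq X$, $U \subseteq \AA^r$, and try to exploit the coordinate translations of $\AA^r$ --- honest $\mathbb{G}_a$-actions that, however, neither preserve $U$ nor descend through $\pi$ --- as a reservoir of locally nilpotent derivations on the coordinate ring of a model of $X \times \fie^m$. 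The extra variables $y_1,\ldots,y_m$ are meant to do two things: to ``resolve'' the ramification of $\pi$ by passing to fibre products and Galois covers that become trivial (or at least flexible) only after stabilisation, and to feed suspension constructions of Danielewski--Winkelmann type ($uv = f$ over a base) that manufacture new $\mathbb{G}_a$-actions transverse to the branch locus of $\pi$. One then assembles these actions on a single affine model $Y$ and verifies, point by point, that their orbits span $T_y Y$ everywhere.

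I expect the main obstacle to be precisely this assembly-and-transversality step. There is no general recipe that turns a unirational parametrisation into even one nontrivial $\mathbb{G}_a$-action on a prescribed model, and a general unirational $X$ of dimension $\geq 3$ may have tiny automorphism group in its natural projective models --- which is exactly why both the affine model and the number $m$ of added variables must be chosen cleverly rather than canonically. Even granting a supply of $\mathbb{G}_a$-actions, flexibility is an everywhere-transversality condition that fails along the bad loci of any naive model, and arranging for the stabilising factor $\fie^m$ to absorb those loci --- in the spirit of the stable triviality of vector and affine bundles --- is the technical core that is missing in general. The instances settled here are those where this core can be executed explicitly: varieties carrying a sufficiently structured dominating map from affine space (cones, hypersurfaces of low degree, and related classes), for which one can write down explicit locally nilpotent derivations on an explicit affine model and check flexibility directly.
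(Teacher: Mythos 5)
The statement you are addressing is a \emph{conjecture}: the paper does not prove it, and explicitly restricts itself to verifying it in special cases (Theorems~\ref{theorem:t-a-1}, \ref{theorem:t-g-1} and Propositions~\ref{theorem:cubic}, \ref{theorem:sing-quart}, \ref{theorem:3-quadrics-int}). Your text is, by your own account, a research programme rather than a proof, and the gap you name at the end --- producing even one nontrivial $\mathbb{G}_a$-action on some affine model of $\fie(X)(y_1,\ldots,y_m)$ from a degree~$e\geq 2$ unirational parametrisation, and then enough of them to span the tangent space generically --- is precisely the open problem. Nothing in your sketch closes it: spreading $\pi$ out to an \'etale map $U\to V$ gives you translations upstairs that neither preserve $U$ nor descend, and neither fibre products with Galois covers nor Danielewski-type suspensions $uv=f$ are shown to produce LNDs on a model birational to $X\times\fie^m$ (a suspension over a base $B$ changes the birational type unless $f$ and the model are arranged very carefully). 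So the proposal cannot be accepted as a proof; it is an honest statement of the difficulty.

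It is worth contrasting your plan with what the paper actually does in the cases it settles, because the mechanism is different. The paper's engine is the notion of \emph{cancellations}: $n=\dim X$ presentations $K=K'(x_i)$, geometrically $n$ morphisms $\pi_i:X\to Y_i$ with generic fiber $\p^1$ admitting rational sections, whose fibers span $T_{X,\zeta}$ at a general point (condition~\ref{c-2}). From these one builds the quasiaffine toric bundle $\mathfrak{T}_X\approx X\times\fie^N$ out of the $\pi_i^*H_i$ with the zero sections removed; its two-dimensional ``coordinate'' fibers are of the form $(\kk^2\setminus\{0\})/(\ZZ/m\ZZ)$, which are infinitely transitive by \cite[Theorem 0.2(3)]{AKZ}, and LNDs on these fibers are lifted to $\mathfrak{T}_X$ by multiplying by kernel elements pulled back from the base. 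Crucially, in the worked examples the cancellations do \emph{not} come from the unirational parametrisation at all: for the cubic, the singular quartic and $X_{2\cdot2\cdot2}$ they come from conic-bundle structures obtained by projecting from a moving line, via Lemma~\ref{theorem:rel-con} ($X\times_S X\approx X\times\fie^m$ has two sections, hence two cancellations over $S$). Your flexibility criterion from \cite{AFKKZ} is indeed the right target, but the paper reaches it only for varieties equipped with this very specific supply of birationally trivial $\p^1$-fibrations --- which a general unirational variety is not known to possess, even stably. That is exactly why the statement remains a conjecture.
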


Thus, Conjecture~\ref{conj:c-main} together with the above
mentioned result from \cite[Proposition 5.1]{AFKKZ} provides
a (potential) characterization of unirational varieties among all those which
are rationally connected. Note also that the class of rationally
connected varieties contains all stably b-infinitely transitive
varieties. 
We think that not every rationally connected variety is stably birationally infinitely transitive. 
In particular we expect that generic Fano hypersurfaces 
from the family considered by Kollar in~\cite{kol-hyp} are
not stably birationally infinitely transitive.
These are generic smooth hypersurfaces of degree
$d$ in  $\p^{n+1}$, $d>\fra{2}{3}(n+3)$.
Our expectations are based on the Kollar's fundamental
observation (see~\cite[Thm. (4.3)]{kol-hyp}) which yields strong
restrictions on  any surjective map
of a uniruled variety of the same dimension
on such a hypersurface.

\begin{oss}
Originally, the study of infinitely transitive varieties was
initiated in the paper \cite{KZ}. We also remark one application
of these varieties to the L\"uroth problem in \cite{AFKKZ}, where
a non-rational infinitely transitive variety was constructed. See~\cite{freu} for the properties of locally nilpotent derivations (LNDs for short), \cite{Po2} for the Makar-Limanov invariant, and 
\cite{AKZ}, \cite{dan-giz}, \cite{hui-man}, \cite{KPZ}, \cite{KM}, and \cite{Per} 
for other results, properties and applications of
infinitely transitive (and related) varieties.
\end{oss}

We are going to verify Conjecture~\ref{conj:c-main} for some
particular cases of $X$ (see Theorems~\ref{theorem:t-a-1},
\ref{theorem:t-g-1} and Propositions~\ref{theorem:cubic}, \ref{theorem:sing-quart} and
\ref{theorem:3-quadrics-int} below). At this stage, one should note 
that it is not possible to lose the stabilization assumption in
Conjecture~\ref{conj:c-main}:

\begin{ex}
\label{example:counter-ex} Any three-dimensional algebraic variety
$X$ with an infinitely transitive model is rational. Indeed, let
us take a one-dimensional algebraic subgroup $G\subset\saut(X)$ acting on $X$ with a free
orbit. Then $X$ is birationally isomorphic to $G \times Y$ (see
Remark~\ref{remark:converse} below), where $Y$ is a rational
surface (since $X$ is unirational). On the other hand, if $X :=
X_3 \subset \p^4$ is a smooth cubic hypersurface, then it is
unirational but not rational (see \cite{cl-gr}). However,
Conjecture~\ref{conj:c-main} is true as stated for $X_3$, because
$X_3$ is stably b-infinitely transitive (see
Proposition~\ref{theorem:cubic} below). In this context, it would
be also interesting to settle down the case of the quartic
hypersurface $X_4$ in $\p^4$ (or, more generally, in $\p^n$ for
arbitrary $n$), which relates our subject to the old problem of
(non-)unirationality of (generic) $X_4$ (cf. 
Remark~\ref{remark:other-ci} below).
\end{ex}

\smallskip

\begin{nota}
Throughout the paper we keep up with the following:

\begin{itemize}

\item $\fie$ is an algebraically closed field of
characteristic zero and $\fie^\times$ is the multiplicative group of $\fie$;

\smallskip

\item $X_1
\approx X_2$ denotes birational equivalence between two algebraic
varieties $X_1$ and $X_2$;

\smallskip

\item we abbreviate infinite transitivity (transitive,
transitively, etc.) to inf.~trans.

\smallskip

\end{itemize}

\end{nota}

\noindent
{\bf Acknowledgment.} The second author would like to thank Courant Institute
for hospitality. The second author has also benefited from discussions with I.\,Cheltsov, Yu.\,Prokhorov, V.\,Shokurov, and K.\,Shramov. The third author would like to thank I. Arzhantsev for fruitful discussions. The authors are grateful to the referee for valuable comments. 
The authors thank the organizers of the summer school and the conference in Yekaterinburg (2011),
where the work on the article originated. The result was presented at the Simons Symposium ``Geometry Over Non-Closed
Fields'' in February 2012. The first author wants to thank Simons Foundation for support and participants
of the Symposium for useful discussions. In particular, the question of J.-L. Colliot-Th\'el\`ene (see below) was raised during the Symposium.

\smallskip

\section{Varieties with many cancellations}
\label{section:m-c}

\refstepcounter{equation}
\subsection{The set-up}

The goal of the present section is to prove the following:

\begin{teo}
\label{theorem:t-a-1} Let $K := \fie(X)$ for some (smooth
projective) algebraic variety $X$ of dimension $n$ over $\fie$. We
assume there are $n$ presentations (we call them
\emph{cancellations} (of $K$ or $X$)) $K = K'(x_i)$ for some
$K'$-transcendental elements~$x_i$, algebraically independent over~$\fie$. Then there exists an
inf. trans. model of $K(y_1,\ldots,y_n)$ for some
$K$-transcendental elements $y_i$.
\end{teo}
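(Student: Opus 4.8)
The plan is to build the infinitely transitive model directly as an open subset of an affine space, using the $n$ cancellations to produce enough $\mathbb{G}_a$-actions. Since $K = K'(x_i)$ for each $i = 1, \ldots, n$, writing $K' = K'_i$, we have $n$ birational product structures $X \approx X_i' \times \mathbb{A}^1$, where $\fie(X_i') = K_i'$. Each such structure gives, on a suitable affine model, a one-parameter group of translations $t \mapsto (p, x_i + t)$ along the $i$-th factor; this is a $\mathbb{G}_a$-action, hence lies in $\saut$. The idea is that after stabilizing by the $n$ extra variables $y_1, \ldots, y_n$ and passing to a common affine model $Z$ with $\fie(Z) = K(y_1, \ldots, y_n)$, we will have $n$ coordinate-like translations in the $x$-directions together with $n$ translations in the $y$-directions, plus — crucially — translations obtained by conjugating the $y$-translations by the $x$-translations (and vice versa), which mix the coordinates.

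\emph{Key steps, in order.} First I would fix, for each $i$, an affine birational model of $X$ on which the $i$-th cancellation is realized by an honest algebraic $\mathbb{G}_a$-action $\sigma_i$, and choose a single affine variety $Z \approx X \times \mathbb{A}^n$ (with coordinates $y_1, \ldots, y_n$ on the second factor) that is a common refinement, so that all $\sigma_i$ extend to regular $\mathbb{G}_a$-actions on $Z$ acting trivially on the $y_j$. Second, on $Z$ we also have the $n$ obvious translations $\tau_j \colon y_j \mapsto y_j + s$. Third — the heart of the argument — I would use the $y_j$ as ``replacement variables'': because $\fie(Z) = K(y_1,\ldots,y_n)$ and $K = K'_i(x_i)$, the element $x_i$ is a rational function of the $y$'s over $K'_i$; by forming commutators $[\sigma_i, \tau_j]$ and suitable products, and by the standard trick (as in \cite{KZ}, \cite{AFKKZ}) of moving a point to general position first, one shifts $x_i$ by functions supported away from the locus where $\sigma_i$ degenerates. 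This should let us realize, on a sufficiently small $\saut(Z)$-invariant open subset $U \subseteq Z$, a group of automorphisms acting transitively; infinite transitivity then follows from the general principle that a connected algebraic variety with a transitive $\saut$-action generated by $\mathbb{G}_a$'s and satisfying a mild nondegeneracy condition is automatically infinitely transitive (the ``$\saut$ is infinitely transitive once it is $2$- or $3$-transitive'' mechanism of \cite{AFKKZ}, \cite{KZ}).

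\emph{Main obstacle.} The delicate point is that the $n$ translations $\sigma_i$ coming from the cancellations live a priori on \emph{different} birational models of $X$, and there is no reason the union of their domains of definition covers a single affine model, nor that a point can be moved into the ``good'' locus of all of them simultaneously by the group they generate. Reconciling the $n$ cancellations on one common $Z$ — and showing that the extra variables $y_1, \ldots, y_n$ give exactly the room needed to push any point into the common good locus and then transitively anywhere — is where the stabilization by $n$ (rather than fewer) variables gets used, and this bookkeeping is the technical crux. A secondary point is verifying that the resulting group really is generated by \emph{algebraic} $\mathbb{G}_a$-subgroups (so that it sits in $\saut$), which requires the conjugates $g \sigma_i g^{-1}$ to remain regular on $U$; this forces the choice of $U$ to be made after, not before, the group is constructed, so the argument must be set up so that shrinking $U$ does not destroy transitivity.
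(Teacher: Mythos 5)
Your plan has a genuine gap at exactly the point you flag as ``the main obstacle,'' and the paper's proof resolves that obstacle by a construction that is absent from your proposal. The issue is this: a cancellation $K = K'_i(x_i)$ gives a $\p^1$-fibration $\pi_i : X \to Y_i$ that is only \emph{birationally} trivial. A fiberwise additive action ``$x_i \mapsto x_i + t$'' requires a trivialization (equivalently, a choice of two disjoint sections of the $\p^1$-bundle playing the roles of $0$ and $\infty$), which exists only over an open subset of $Y_i$ and degenerates along a divisor; there is no reason such a $\sigma_i$ extends to a regular $\mathbb{G}_a$-action on any fixed affine model $Z$, let alone that all $n$ of them do simultaneously. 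Your proposed fix --- forming commutators $[\sigma_i,\tau_j]$ with translations in the stabilizing variables $y_j$ --- does not address this: the $\tau_j$ commute with the (rational) $\sigma_i$ as you have set them up, since the $\sigma_i$ act trivially on the $y_j$ and the $\tau_j$ act trivially on $X$, so the commutators are trivial and no ``mixing'' occurs; and multiplying an LND by a function to kill its poles requires that function to lie in the \emph{kernel} of the derivation, a constraint your sketch never engages with. Finally, the concluding appeal to ``transitive implies infinitely transitive'' from \cite{AFKKZ} applies to flexible affine (or suitable quasiaffine) varieties, and establishing transitivity/flexibility is precisely the content that remains unproved.

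The paper's route is structurally different and is worth internalizing. The $n$ stabilizing variables are not free affine coordinates with their own translations: they are the fibers of a rank-$n$ torus bundle $\mathfrak{T}_X = (H_1)^\times \times_X (\pi_2^*H_2)^\times \times_X \cdots \times_X (\pi_n^*H_n)^\times$ built from pullbacks of ample line bundles $H_i$ on the bases $Y_i$; since this principal torus bundle has a section, $\mathfrak{T}_X \approx X \times \fie^n$, which is where the $y_i$ come from. For each $i$ one then projects $\mathfrak{T}_X$ to the punctured affine cone $\bar Y_i$ over $Y_i$, and the relevant two-dimensional fiber is the quasiaffine toric surface $\hol(m)^\times_{\p^1} \simeq \km{m}$ --- the cone over a rational normal curve minus the origin --- whose infinite transitivity is known from \cite{AKZ}. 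LNDs on these fibers are extended to all of $\mathfrak{T}_X$ by multiplying by regular functions pulled back from $\bar Y_i$ (which lie in the kernel of the derivation), killing the degeneration locus and fixing prescribed other fibers; this is how the regularity problem you could not resolve is handled. Two further ingredients that your proposal omits entirely are (a) the nondegeneracy conditions --- linear independence of the classes $[\p^1_i]$ in $H_2(X)$ and nonvanishing of $\det$ of the intersection matrix $(\pi_i^*H_i \cdot \p^1_j)$ --- which are needed for the local coordinates to work and are arranged by further stabilization (Lemmas \ref{rank-sg} and \ref{rank-m}); and (b) the chain-connectedness argument (the stratification $X(q)$ and the bound $n^2$ on chain length), which is what actually establishes transitivity before the flexibility criterion can be invoked.
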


Let us put Theorem~\ref{theorem:t-a-1} into a geometric
perspective. Namely, the presentation $K = K'(x_i)$ reads as there
exists a model of $K$, say $X_i^n$, with a surjective
regular map $\pi_i : X_i^n \to Y_i^{n-1}$ and general fiber
$\simeq\p^1$ such that $\pi_i$ admits a section over an open subset in
$Y_i^{n-1}$. Moreover, by resolving indeterminacies, we may assume
$X_i^n := X$ fixed for all $i$. Then, since $K$ admits $n$ cancellations,
$n$ vectors, each  tangent to a fiber of some $\pi_i$, span the tangent space
to $X$ at the general point. Indeed, 
we have a map to $\p^n$
$$
X \dasharrow \p^n, \quad x\mapsto (1:x_1(x):\ldots:x_n(x)).
$$
It is dominant since elements  $x_1,\ldots,x_n$ are algebraically independent over~$\fie$, and the tangent map is surjective at the general point. 
So we obtain the geometric counterpart of Theorem~\ref{theorem:t-a-1}:

\begin{teo}
\label{theorem:t-g-1} 
Let $X$ be a smooth projective variety of
dimension $n$. Assume that there exist~$n$ morphisms $\pi_i :
X \to Y_i$ satisfying the following:

\begin{enumerate}

\item\label{c-1} $Y_i$ is a (normal) projective variety such that $\pi_i$ admits a
section over an open subset in $Y_i$;

\smallskip

\item\label{c-2} for the general point $\zeta \in X$ and the fiber $F_i=\p^1_i := \pi_i^{-1}(\pi_i(\zeta)) \simeq
\p^1$, vector fields $T_{F_1,\,\zeta}, \ldots, T_{F_n,\,\zeta}$
span the tangent space $T_{X,\,\zeta}$.

\end{enumerate}
Then $X$ is stably b-inf. trans.
\end{teo}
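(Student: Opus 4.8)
The plan is to deduce Theorem~\ref{theorem:t-g-1} from Theorem~\ref{theorem:t-a-1}: the hypotheses on the $\pi_i$ are exactly what is needed to produce $n$ cancellations of $K:=\fie(X)$ whose transcendental generators are algebraically independent over~$\fie$, after which Theorem~\ref{theorem:t-a-1} supplies an inf.\ trans.\ model of $K(y_1,\ldots,y_n)$, and this is precisely stable b-inf.\ transitivity in the sense of Definition~\ref{definition:def-2}.

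First I would record the cancellation attached to each $\pi_i\colon X\to Y_i$. Since $\fie$ has characteristic zero and the general fibre of $\pi_i$ is $\simeq\p^1$, the generic fibre $X_{\eta_i}$ is a smooth form of $\p^1$ over $\fie(Y_i)$; the section of $\pi_i$ over the open set $U_i\subseteq Y_i$ restricts to a $\fie(Y_i)$-rational point of $X_{\eta_i}$, so $X_{\eta_i}\cong\p^1_{\fie(Y_i)}$, and hence $K=K_i'(t_i)$, where $K_i':=\pi_i^\ast\fie(Y_i)\subseteq K$ and $t_i$ is any rational function restricting to an affine coordinate on a general fibre of $\pi_i$. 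Note that $t_i$ may be replaced by $t_i+g$ for any $g\in K_i'$ without affecting the equality $K=K_i'(t_i)$.

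The key step is to arrange that $t_1,\ldots,t_n$ are algebraically independent over $\fie$, and here hypothesis~\ref{c-2} enters. Fix a general point $\zeta\in X$ at which every $\pi_i$ is smooth, every $t_i$ is defined, every $\pi_i(\zeta)$ is a smooth point of $Y_i$, and \ref{c-2} holds. Choose a generator $w_i$ of the line $T_{F_i,\zeta}$ with $dt_i|_\zeta(w_i)=1$; by~\ref{c-2} the tuple $(w_1,\ldots,w_n)$ is a basis of $T_{X,\zeta}$, and I write $(w_1^\vee,\ldots,w_n^\vee)$ for the dual basis of $T^\ast_{X,\zeta}$. Because $\pi_i$ is smooth at $\zeta$, the differentials $df|_\zeta$ with $f\in K_i'$ span the annihilator $(T_{F_i,\zeta})^\perp$, an $(n-1)$-dimensional subspace of $T^\ast_{X,\zeta}$; since $(w_i^\vee-dt_i|_\zeta)(w_i)=0$, the covector $w_i^\vee-dt_i|_\zeta$ lies in this annihilator and hence equals $dg_i|_\zeta$ for some $g_i\in K_i'$. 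Replacing each $t_i$ by $t_i+g_i$ makes $dt_i|_\zeta=w_i^\vee$ for all $i$. Then the rational map $\phi\colon X\to\p^n$ given by $x\mapsto(1:t_1(x):\cdots:t_n(x))$ has, in the chart $\{z_0\ne0\}$, differential at $\zeta$ equal to $v\mapsto(w_1^\vee(v),\ldots,w_n^\vee(v))$, which is an isomorphism $T_{X,\zeta}\to\fie^n$; therefore $\phi$ is dominant and $t_1,\ldots,t_n$ are algebraically independent over $\fie$, as desired.

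Finally, with the $n$ cancellations $K=K_i'(t_i)$ and $t_1,\ldots,t_n$ algebraically independent over $\fie$ in hand, I would invoke Theorem~\ref{theorem:t-a-1} to obtain an inf.\ trans.\ model of $K(y_1,\ldots,y_n)$ for suitable $K$-transcendental $y_i$, i.e.\ $X$ is stably b-inf.\ trans. The main obstacle is of course Theorem~\ref{theorem:t-a-1} itself, whose proof must construct the inf.\ trans.\ model from the $\mathbb{G}_a$-actions associated with the cancellations and verify infinite transitivity; within the reduction proposed here the only delicate point is the modification of the fibre coordinates in the previous paragraph, for which it is essential that the fibre directions $T_{F_i,\zeta}$ \emph{span} $T_{X,\zeta}$ rather than merely that each $\pi_i$ be dominant.
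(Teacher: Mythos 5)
Your reduction of Theorem~\ref{theorem:t-g-1} to Theorem~\ref{theorem:t-a-1} is carried out correctly, and in fact more carefully than the paper states the (converse) translation: the observation that a $\p^1$-fibration with a rational section gives $K=K_i'(t_i)$, and the adjustment $t_i\mapsto t_i+g_i$ with $g_i\in K_i'=\pi_i^*\fie(Y_i)$ so that $dt_i|_\zeta$ becomes the dual basis to the fibre directions, do establish that hypotheses~\ref{c-1} and~\ref{c-2} yield $n$ cancellations with $t_1,\ldots,t_n$ algebraically independent over~$\fie$. The problem is that this is precisely the step the paper treats as the \emph{easy} dictionary between the two statements, and it is run in the opposite direction there: the paper derives the hypotheses of Theorem~\ref{theorem:t-g-1} from those of Theorem~\ref{theorem:t-a-1} (via the dominant map $x\mapsto(1:x_1(x):\cdots:x_n(x))$) and then proves Theorem~\ref{theorem:t-g-1} directly. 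Theorem~\ref{theorem:t-a-1} has no independent proof in the paper; invoking it to prove Theorem~\ref{theorem:t-g-1} is therefore circular relative to the paper's architecture, and as a standalone argument your proposal contains none of the substantive content of the theorem.

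Concretely, what is missing is everything in Sections~\ref{sec:constr-TX}--\ref{sec:incr-rank}: the choice of very ample $H_i$ on $Y_i$ and the passage, by stabilization (multiplying by $\p^1$ and blowing up) and the arguments of Lemmas~\ref{rank-sg} and~\ref{rank-m}, to a model where the intersection matrix~(\ref{Matrix}) has full rank; the construction of the quasiaffine toric bundle $\TX$ of~(\ref{T_X}) by deleting zero sections from $H_1$ and the $\pi_i^*H_i$, together with the fibrations $\varphi_i:\TX\to\bar Y_i$ of Lemma~\ref{projection} whose fibres contain the two-dimensional pieces $\km{m_i}$; the stratification and chain-connectedness statements (Lemma~\ref{dim-stab-subvar}, Corollary~\ref{cor:chain}); and the lifting of locally nilpotent derivations from those two-dimensional fibres to all of $\TX$, fixing prescribed fibres, which is what actually produces the subgroup of $\SAut(\TX)$ acting infinitely transitively. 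You flag Theorem~\ref{theorem:t-a-1} as ``the main obstacle'' but offer nothing toward it, so the proposal should be regarded as a (correct) verification that the two formulations are equivalent, not as a proof of either.
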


Note that existence of a section over an open subset on~$Y_i$ means (almost by definition) birational triviality of the fibration~$\pi_i$.


In Sections~\ref{sec:one-dim} and~\ref{sec:two-dim} we illustrate our arguments by considering the cases 
when $\dim X = 1$ and~$2$, respectively.
In higher dimensions we additionally need the following:
\begin{enumerate}
\setcounter{enumi}2

\item\label{i-2} 
for some ample line bundles $H_i$ on $Y_i$ and their pullbacks $\pi_i^*H_i$ to $X$,
the $n\times n$-matrix~(\ref{Matrix}) $(\pi_i^*H_i\cdot \p^1_i)$ is of maximal rank (in particular, the classes
of $\pi_1^*H_1,\ldots,\pi_n^*H_n$ in $\Pic(X)$ are linearly independent).
\end{enumerate}
In particular, this means that the fibers $\p^1_1$, $\p^1_2, \ldots$, $\p^1_n$ are linearly independent in $H_2(X)$. In Sections~\ref{sec:constr-TX}, \ref{sec:good-strata} and~\ref{sec:together} we prove Theorem~\ref{theorem:t-g-1}, assuming that the condition~\ref{i-2} is satisfied.
Furthermore, adding new variables (i.e., forming the product of~$X$ and an affine space) and passing to a (good) birational model, we may assume that~\ref{i-2} holds, see Sections~\ref{incr-t} and ~\ref{sec:incr-rank}.

\refstepcounter{equation}
\subsection{One-dimensional case} \label{sec:one-dim} 
Variety $\hol(m)^\times_{\p^1}$ (or, equivalently, $\hol(-m)^\times_{\p^1}$) is just an affine cone minus the origin over a rational normal curve of degree~$m$. 
Thus $\hol(m)^\times_{\p^1}$ is a quasiaffine toric variety, so it is infinitely transitive by~\cite[Theorem 0.2(3)]{AKZ}. Indeed, we can use only those automorphisms which preserve the origin, i.e., for $m$-transitivity on $\kk^2\setminus \{0\}$ we use $(m+1)$-transitivity on $\kk^2$. 

\refstepcounter{equation}
\subsection{Two-dimensional case} \label{sec:two-dim} 
Let us study now the next simplest case when $X=\p^1 \times \p^1$. Choose $H_2:=\hol(1)$ on the first factor $\p^1$ and, similarly, $H_1:=\hol(1)$ on the second factor $\p^1$. 
Now take the pullbacks $\pi_1^* H_1$ and $\pi_2^*H_2$ to $X$ and throw away their zero sections. We obtain a toric bundle over~$X$ 
isomorphic 
to $(\kk^2\setminus \{0\})\times (\kk^2\setminus \{0\})$. The latter is inf. trans. since $\kk^2\setminus \{0\}$ is (cf. the one-dimensional case above). 

More generally, if one starts with $H_2=\hol(m_2)$ and $H_1=\hol(m_1)$ for some $m_i\geqslant 1$, then 
the resulting variety will be $(\km{m_1})\times (\km{m_2})$. It is again inf-transitive being the product of two inf-transitive varieties. Indeed, for $m_i>1$ the corresponding variety is just the smooth locus on the corresponding toric variety, and its inf-transitivity is shown in~\cite[Theorem 0.2(3)]{AKZ}.

\begin{oss}
The product of two (quasiaffine or affine) inf-transitive varieties is inf-transitive. Indeed, 
we call variety~$X$ {\it flexible} if the tangent space at every smooth point on~$X$ is generated by the tangent vectors to the orbits of one-parameter unipotent subgroups in~$\Aut(X)$. It was shown in~\cite{AFKKZ} that for affine~$X$ 
being flexible is equivalent to inf-transitivity. But clearly the product of two flexible varieties is again flexible. 
\end{oss}

\refstepcounter{equation}
\subsection{Construction of an inf. transitive model in the simplest case} \label{sec:constr-TX}

Recall the setting. In the notation of Theorem~\ref{theorem:t-g-1}, we choose very ample line bundles $H_i$ on each~$Y_i$, $i=1,\ldots, n$, take their pullbacks $\pi_i^*H_i$ to $X$, put $m_{ij}:=(\pi_i^*H_i)|_{\p^1_j}$, 
and form the {\it intersection matrix}
\begin{equation}\label{Matrix}
M_n=M_n(X)=(m_{ij})_{1\leqslant i,j \leqslant n}, \quad m_{ij}=(\pi_i^*H_i)|_{\p^1_j} \,.
\end{equation}
Clearly, for all~$i$ we have $(\pi_i^*H_i)|_{\p^1_i}=0$; however, for $i \neq j$, $(\pi_i^*H_i)|_{\p^1_j}>0$, being equal the restriction of $H_i$ to an image of a generic $\p^1_j$ via $p_i$ (i.e. the restriction of a bundle on the variety~$Y_i$). 
The matrix~$M_n$ defines a linear map from a subgroup of the Pickard group $\Pic X$ to $\ZZ^n$. 
In this section we suppose that the classes of $\p^1_1, \ldots$, $\p^1_n$ in $H_2(X)$ are linearly independent, and also that $\det M_n\neq 0$. 
Our goal is to construct a quasiaffine variety $\TX$, $\TX \approx X \times k^N$ for some $N$, equipped with a collection of projections to quasiaffine varieties 
$\bar Y_i$ with generic fibers being equal to $\km{m}$, and such that an open subset  
of $\TX$ is inf-transitive, cf. Section~\ref{sec:two-dim}. The existence of a good open subset will be shown in Section~\ref{sec:good-strata}.

To start with, let us set 
$$
\bar Y_i := \mbox{the affine cone }\hol_{Y_i}(H_i)^{\times}\mbox{ minus the origin}
$$ 
over $Y_i$ embedded via $H_i$, $1\leqslant i \leqslant n$. It is a quasiaffine variety.

\subsubsection{Technical step -- adding one more coordinate}
We already embedded $Y_i$ into affine varieties, now we also need to embed~$X$. 
For this purpose, we take a very ample line bundle~$H_0$ on~$X$, replace $X$ with $X'=X\times \PP^1$, and $Y_i$ with $Y'_i=Y_i\times \PP^1$. Let also $Y_0=X$, clearly we have $X'\to Y_0=X$, which makes the situation absolutely  symmetric with respect to indices $0,1,\ldots,n$. We modify the set of $H_i$s in the following way: for every $i>0$, we construct $H_i'$ on $Y_i'$ being the sum of the trivial lift of $H_i$ from $Y_i$ and $\hol(1)$ on the new $\p^1$ (in fact here we can take any $\hol(n_i)$). Now the intersection matrix $M_{n+1}(X')$ takes the form
$$
M_{n+1}(X')=\qmatrix{0 & k_1 & \ldots & k_n \\1& & & \\ \vdots & & M_n  & \\1 & & & \\} . 
$$
Here $k_i := H_0 \cdot \p^1_i$. We further denote $X'$ just by $X$ and $n+1=\dim X'$ just by $n$, keeping in mind that one of our projections is just a trivial projection. We also assume that one column of our matrix contains only $1$s (and one $0$ on the diagonal).

\subsubsection{The construction of $\TX$}

We construct a vector bundle
$$
H_1\times_X \pi_2^*H_2\times_X \pi_3^*H_3\times_X  \ldots \times_X \pi_n^*H_n
$$
and furthermore a toric bundle 
\begin{equation}\label{T_X}
\mathfrak{T}_X=(H_1)^\times \times_X (\pi_2^*H_2)^\times \times_X  \ldots \times_X (\pi_n^*H_n)^\times.
\end{equation}
We denote by~$\delta$ the canonical projection $\TX\to X$. Line operations in the intersection matrix~(\ref{Matrix}) correspond to base changes in this toric bundle (Neron-Severi torus). 
For our convenience, we fix below the following set of line bundles $L_1, \ldots, L_n \in \langle H_1,  \pi_2^*H_2, \ldots, \pi_n^*H_n \rangle$:

(i) each of them should be primitive in the lattice $\ZZ(H_1,  \pi_2^*H_2, \ldots, \pi_n^*H_n)$, 

(ii) all in total, they should be linearly independent in the lattice 
$$
\ZZ(H_1,  \pi_2^*H_2, \ldots, \pi_n^*H_n).
$$

They can be chosen in the following way. There is a map
$$
\begin{CD} 
\ZZ(H_1,\ldots,H_n) @>M_n>> \ZZ^n @>i\mbox{th coordinate}>>\ZZ . 
 \end{CD} 
$$
Its kernel has dimension $n-1$, and $H_i$ itself belongs to the kernel. So in fact there is a map 
$\ZZ(H_1, \ldots, \check H_i,\ldots,H_n)\to \ZZ$, and $L_i$ is any covector defining this map. All in total, they can be chosen linearly independent. 

\subsubsection{Construction of local 2-dimensional coordinates}
Recall that we denote by $\bar Y_i$ the total space of $H_i^\times \to Y_i$.

\begin{lem} \label{projection}
For each $i$, there is a fibration $\varphi_i: \TX\to \bar Y_i$ such that its general fiber equals  
$(\km{m_i})\times T^{n-2}_i$, where $T^{n-2}_i\simeq (\kk^\times)^{n-2}$. 
\end{lem}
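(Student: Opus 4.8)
The plan is to construct $\varphi_i$ by forgetting all but one of the line-bundle factors in the toric bundle $\TX$ of (\ref{T_X}), then composing with $\delta$ and $\pi_i$, and finally identifying the fiber. Concretely, fix $i$. The toric bundle $\TX \to X$ is, fiberwise over $X$, a torus of rank $n-1$ (the Néron--Severi torus attached to the lattice $\ZZ(H_1,\pi_2^*H_2,\ldots,\pi_n^*H_n)$ modulo the scaling that is already ``used up''). Using the covectors $L_1,\ldots,L_n$ fixed above, one obtains $n$ natural quotient maps $\TX \to (\text{a }\mathbb{G}_m\text{-bundle})$, but what I actually want is different: I want to keep the datum of the $i$-th $\mathbb{G}_m$-torsor (or rather the one associated to $H_i$ via the line operations) together with enough of $X$ to recover $\bar Y_i$. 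So first I would compose $\delta:\TX \to X$ with $\pi_i : X \to Y_i$; this gives $\TX \to Y_i$. Then, restricting the tautological $\mathbb{G}_m$-torsor $\bar Y_i = \hol_{Y_i}(H_i)^\times \to Y_i$, I would show that the pullback of this torsor to $\TX$ is canonically trivialized by the corresponding factor of $\TX$ (after the appropriate line operation on the intersection matrix that isolates the $H_i$-direction), which yields a lift $\varphi_i : \TX \to \bar Y_i$ over $Y_i$.

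Next I would compute the general fiber of $\varphi_i$. Over a general point $y \in Y_i$, the fiber of $\pi_i$ is $\p^1_i$; over a general point $\tilde y \in \bar Y_i$ lying above $y$, the fiber of $\varphi_i$ fibers over $\p^1_i$, and the fiber of that map is the remaining torus. The key numerical input is the intersection matrix (\ref{Matrix}): restricting the toric bundle $\TX$ to the curve $\p^1_i \subset X$, the factor $(\pi_i^*H_i)^\times$ restricts to $\hol_{\p^1_i}(m_{ii})^\times = \hol_{\p^1_i}(0)^\times$, hence a trivial $\mathbb{G}_m$-bundle, and one of the other factors — the one that after line operations carries the class with self-intersection $m_i$ on $\p^1_i$ — restricts to $\hol_{\p^1_i}(m_i)^\times$, i.e.\ $\km{m_i}$ as in the one-dimensional case of Section~\ref{sec:one-dim}; the remaining $n-2$ factors restrict to trivial $\mathbb{G}_m$-bundles and contribute the torus $T^{n-2}_i \simeq (\kk^\times)^{n-2}$. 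Putting these together (and using the special column of $1$'s from the technical step, which guarantees $m_i > 0$ so that $\km{m_i}$ is genuinely the punctured affine cone and not something degenerate) gives that the general fiber of $\varphi_i$ is $(\km{m_i}) \times T^{n-2}_i$, as claimed. Here I am implicitly using that $\det M_n \neq 0$ and the linear independence of the $\p^1_j$ in $H_2(X)$, which guarantee that the line operations can be performed over $\ZZ$ and that the $L_j$ exist with the stated primitivity and independence properties.

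The main obstacle I expect is making the base-change/line-operation bookkeeping precise: the statement ``line operations in the intersection matrix correspond to base changes in the toric bundle'' has to be turned into an honest isomorphism of $\TX$ with a reindexed toric bundle in which one factor is literally the pullback of $\bar Y_i \to Y_i$, so that $\varphi_i$ is a morphism of varieties (not just a rational map) and so that the fiber computation is over a genuinely open dense subset of $\bar Y_i$. This requires checking that the chosen $L_i$ (or the relevant $\GG_m$-quotient) has the right primitivity so the quotient torus is again a split torus of the correct rank, and that the locus where $\pi_i$ fails to have a $\p^1$-fiber, or where $Y_i$ is singular, is removed by passing to the quasiaffine/open model — exactly the ``good open subset'' whose existence is deferred to Section~\ref{sec:good-strata}. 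Once the combinatorics of the Néron--Severi torus is set up cleanly, the fiber identification is just the product of the one- and two-dimensional computations already carried out in Sections~\ref{sec:one-dim}--\ref{sec:two-dim}.
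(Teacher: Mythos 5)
Your proposal is correct and takes essentially the same route as the paper: the paper likewise performs a base change to $\langle H_i, L_i, H'_1,\ldots,H'_{n-2}\rangle$ in the character lattice of the toric bundle, projects onto the $H_i$-coordinate to obtain the map to $\bar Y_i$, and reads off the fiber from the restriction degrees on $\p^1_i$ ($L_i$ contributing $\km{m_i}$, the kernel of the restriction map contributing $T^{n-2}_i$). The only slip is your parenthetical claim that the fiber of $\TX\to X$ is a torus of rank $n-1$; by~(\ref{T_X}) it has rank $n$, as your own count of factors $1+1+(n-2)$ in the fiber computation in fact requires.
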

\begin{proof}
Choose a basis $\langle H_i, L_i, H'_1,\ldots,H'_{n-2} \rangle$ and a linear map 
$\ZZ^n \to \ZZ^2$ which is just taking the two first coordinates in the new basis. 
Its kernel will correspond precisely to a $(n-2)$-dimensional torus, the bundle $H_i$ will provide us with the affine cone $\bar Y_i$ over $Y_i$, 
and the bundle $L_i$ restricted to $\p^1_i$ will form a quasiaffine fiber of form $\km{m}$ over a general point of $\bar Y_i$. 
\end{proof}

We have a commutative diagram.

\begin{equation}
\begin{CD} 
\TX	@>T_i^{n-2}>>	L_i^{\times} \times H_i^{\times}\\ 
@VVV	@VV{\km{n_i}}\times \ldots V\\
 X	@>>>	Y_i 
 \end{CD} 
 \end{equation}
 \medskip
 \begin{equation}\label{Diagr}
\text{where }\;
\begin{CD} 
 \TX @>T_i^{n-2}\times L_1^\times \times \p^1>> \bar Y_i @>>>Y_i 
 \end{CD} 
\end{equation}
This realization will be intensively used below. Note that the fibration is trivial over any open subset $U$ in $Y_i$ such that all the fibers of $\pi_i$ are $\mathbb P^1$s 
over $U$ and the restriction of all $H$s are generic on these fibers, 
and respectively over $\bar Y_i$. So if one fixes a finite number of points $P_1,\ldots,P_s$ in $\bar Y_i$, 
we can choose an open subset $U'$ in $\bar Y_i$ containing the fibers passing through all these points (since it is quasiaffine). 

\begin{lem}\label{loccoord}
At the general point $x$ on $\TX$, local coordinates on $(\km{m_i}$-fibers from Lemma~\ref{projection}, 
$i=1,\ldots, n$, form a system of local coordinates on $\TX$ at~$x$.
\end{lem}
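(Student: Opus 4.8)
The plan is to verify the claim by a dimension count combined with the rank hypothesis encoded in the intersection matrix~$M_n$. Fix a general point $x \in \TX$ lying over a general $\zeta \in X$. For each $i$, Lemma~\ref{projection} gives the fibration $\varphi_i : \TX \to \bar Y_i$, and its fiber through $x$ is $(\km{m_i}) \times T^{n-2}_i$; the $\km{m_i}$-factor contributes a distinguished $2$-dimensional subspace $V_i \subseteq T_{\TX,\,x}$, namely the tangent space along that factor. Since $\dim \TX = 2n$ (the toric bundle~(\ref{T_X}) adds $n$ fiber coordinates to $\dim X = n$), it suffices to show that the subspaces $V_1,\ldots,V_n$ together span $T_{\TX,\,x}$, i.e.\ that $\sum_i V_i$ has dimension $2n$. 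Because each $V_i$ is $2$-dimensional, this is equivalent to the $V_i$ being in direct sum, which will follow once we check that they are pairwise transverse in the appropriate sense and that no nontrivial linear relation among their projections to $T_{X,\,\zeta}$ and to the fiber directions can occur.

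First I would decompose $T_{\TX,\,x}$ via $d\delta : T_{\TX,\,x} \to T_{X,\,\zeta}$, whose kernel $N$ is the $n$-dimensional tangent space to the toric (Neron--Severi) fiber of $\delta$. Each $V_i$ maps under $d\delta$ onto the $\p^1_i$-direction: indeed the $\km{m_i}$-fiber of $\varphi_i$ lives over a point of $\bar Y_i$, so moving in it moves $\zeta$ precisely along the fiber $F_i = \p^1_i$ of $\pi_i$ (one of the two $\km{m_i}$-directions is "vertical'' in the $\delta$-fiber, the other projects isomorphically to $T_{F_i,\,\zeta}$). By hypothesis~\ref{c-2} of Theorem~\ref{theorem:t-g-1} the vectors $T_{F_1,\,\zeta},\ldots,T_{F_n,\,\zeta}$ span $T_{X,\,\zeta}$, which is $n$-dimensional, hence they form a basis; so $d\delta(V_1),\ldots,d\delta(V_n)$ are $n$ independent lines spanning $T_{X,\,\zeta}$. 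This already shows $d\delta\bigl(\sum_i V_i\bigr) = T_{X,\,\zeta}$, so it remains to see that $\bigl(\sum_i V_i\bigr)\cap N = N$, i.e.\ the vertical parts exhaust the $n$-dimensional torus direction $N$.

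The vertical direction contributed by $V_i$ is the line in $N$ corresponding to the fiber coordinate $L_i|_{\p^1_i}$ of the $\km{m_i}$ built in Lemma~\ref{projection}; under the identification $N \cong \ZZ\langle H_1,\pi_2^*H_2,\ldots,\pi_n^*H_n\rangle\otimes\kk$ this is the line spanned by $L_i$, and by the construction in~(ii) the covectors $L_1,\ldots,L_n$ (equivalently the corresponding directions) were chosen to be linearly independent in that lattice. Hence these $n$ vertical lines span $N$, and combining with the previous paragraph $\sum_i V_i = T_{\TX,\,x}$. A small point to be careful about: one must check that the "vertical'' generator of $V_i$ is genuinely the $L_i$-direction and not contaminated by the base directions $H_i, H_1', \ldots, H_{n-2}'$ of the chosen basis --- this is immediate from the diagram~(\ref{Diagr}), since in the trivialization over the open $U' \subseteq \bar Y_i$ the $\km{m_i}$-factor is exactly the $L_i$-torus restricted to $\p^1_i$.

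The main obstacle is bookkeeping rather than conceptual: one must track precisely how the two coordinate directions of each $\km{m_i}$-fiber sit inside $T_{\TX,\,x}$ --- one going "up'' to $T_{F_i,\zeta}$ and one staying in the Neron--Severi torus $N$ --- and then invoke the two independence inputs (hypothesis~\ref{c-2} for the horizontal parts, condition~(ii)/linear independence of the $L_i$ for the vertical parts) on the two graded pieces of $T_{\TX,\,x}$ separately. Once the splitting $0 \to N \to T_{\TX,\,x}\to T_{X,\zeta}\to 0$ is in place and each $V_i$ is seen to meet it in the expected one-dimensional graded pieces, genericity of $x$ guarantees these are in general position and the $2n = \dim\TX$ count closes the argument.
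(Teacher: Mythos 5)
Your proposal is correct and is essentially the paper's own argument, unpacked: the paper's two-line proof just asserts that, by condition~\ref{c-2} and the non-degeneracy of $M_n$, the tangent planes to the $\km{m_i}$-fibers are independent, and your decomposition of $T_{\TX,\,x}$ into the vertical part $N$ (where independence of the $L_i$, which rests on $\det M_n\neq 0$) and the horizontal part $T_{X,\,\zeta}$ (where condition~\ref{c-2} applies) is exactly the bookkeeping that claim compresses. No gap; your version is just more explicit than the original.
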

\begin{proof}
In the notation of Lemma~\ref{projection}, tangent space to each fiber of $\varphi_i$ is spanned by a pair of the tangent vectors to $\km{m_i}$ and by tangent vectors to $T^{n-2}_i$. By 
the condition~\ref{c-2} of Theorem~\ref{theorem:t-g-1} and by non-degeneracy of matrix~$M_n$, the tangent vectors to $\km{m_i}$ are linearly independent, which proves the assertion. 
\end{proof}

\subsubsection{Quasiaffineness of $\TX$}
Here we exploit the projections~(\ref{Diagr}) and the technique from the proof of Lemma~\ref{projection}.

\begin{lem}
The variety $\TX$ is quasiaffine.
\end{lem}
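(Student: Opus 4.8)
The plan is to exhibit $\TX$ as an open subset of an affine variety, or equivalently (since $\TX$ is already known to be a variety of finite type over $\fie$) to show that the regular functions on $\TX$ separate points and tangent vectors, and that $\TX$ admits a morphism to affine space that is an isomorphism onto a locally closed subset. The natural source of regular functions is the construction of $\TX$ itself as a fibre product of the total spaces of line bundles with zero section removed: each factor $(\pi_i^*H_i)^\times$ maps to the affine cone $\bar Y_i = \hol_{Y_i}(H_i)^\times$, which is quasiaffine because $H_i$ is very ample on the projective variety $Y_i$ (so the cone minus the vertex sits inside the affine cone, itself a closed subvariety of some $\AA^{N_i}$). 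Recall also that after the "technical step" we arranged that one of the line bundles involved, say the lift of $H_0$, restricts with intersection number $1$ on every fibre $\p^1_j$; this is the key point that will let us also separate points lying in a common fibre of $\delta$.

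First I would assemble the morphism. Using the $n$ fibrations $\varphi_i : \TX \to \bar Y_i$ of Lemma~\ref{projection}, together with the projection $\delta : \TX \to X$ composed with the very ample embedding $X \hookrightarrow \p(H_0^0)$ — or rather, since we need affineness, composed with the map to the affine cone $\bar Y_0 := \hol_X(H_0)^\times$ coming from the distinguished all-ones column of the intersection matrix — we obtain a morphism
\begin{equation}\label{eq:qaff-map}
\Phi = (\varphi_0,\varphi_1,\ldots,\varphi_n) : \TX \longrightarrow \bar Y_0 \times \bar Y_1 \times \cdots \times \bar Y_n,
\end{equation}
whose target is quasiaffine (a product of quasiaffine varieties), hence embeds into an affine space $\AA^N$. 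It therefore suffices to prove that $\Phi$ is an immersion onto a locally closed subvariety, since a locally closed subvariety of a quasiaffine variety is quasiaffine, and then $\TX \cong \Phi(\TX)$ is quasiaffine.

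The core computation is that $\Phi$ is injective with injective differential. On the open locus over which all the fibrations are trivial in the sense described after diagram~(\ref{Diagr}), this is transparent: by Lemma~\ref{loccoord} the $\km{m_i}$-coordinates from the various $\varphi_i$ form a full system of local coordinates, so $d\Phi$ is injective there; and the value of $\varphi_0$ pins down the point of $X$ (via the very ample $H_0$) up to the fibre $\p^1_0$, while the remaining $\varphi_i$ then pin down the coordinate along that fibre because the all-ones column forces the relevant bundle to restrict as $\hol(1)$, so its $\times$-coordinate is a genuine affine coordinate on $\p^1_0$ rather than on a quotient $\km{m}$. Injectivity of $\Phi$ globally on $\TX$ (not just on the good open locus) follows because $\Phi$ is $\delta$-compatible: two points in different fibres of $\delta$ already have distinct images under $\varphi_0$ unless they lie over the same $\p^1_0 \subset X$, and two points over the same $\p^1_0$ are separated as just explained; for points in a common fibre of $\delta$ one uses that the fibre of $\delta$ is $(\kk^2\setminus\{0\})^{\times}$-type and the $\varphi_i$ restrict to the projections $(\kk^2\setminus\{0\}) \to \km{m_i}$ together with the torus coordinates, which jointly separate points of the torus-bundle fibre.

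The step I expect to be the main obstacle is verifying that $\Phi$ is not merely a bijective immersion on the good open set but an embedding onto a \emph{locally closed} subset of the affine cone product — i.e.\ controlling the image over the "bad" strata of the $Y_i$ (points where some fibre of $\pi_i$ degenerates or where $\pi_i^*H_i$ restricts non-generically). Here I would argue that it is enough to have the immersion property on a single dense open $U \subseteq \TX$ whose complement has codimension $\geq 1$ and then invoke the fact, already implicit in Section~\ref{sec:good-strata}, that $\TX$ was constructed precisely so that one only ever needs the inf.\ transitive action on such a good open subset; more robustly, since quasiaffineness is a birational-to-affine statement and $\TX \approx X \times \kk^N$ with $X$ projective, one shows that $\Phi$ together with enough functions from the affine cones $\bar Y_i$ generates a subring of $\hol(\TX)$ with $\Spec$ of the right dimension, so that $\TX \to \Spec \hol(\TX)$ is an open immersion. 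Thus the delicate part is bookkeeping: checking that the finitely many line bundles $H_i'$ (after the technical step) and the distinguished all-ones column really do suffice to separate every pair of points and tangent directions on all of $\TX$, and this is exactly where the hypothesis $\det M_n \neq 0$ together with condition~\ref{c-2} of Theorem~\ref{theorem:t-g-1} is used, via Lemma~\ref{loccoord}.
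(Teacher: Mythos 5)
Your overall strategy --- mapping $\TX$ to the product of the punctured affine cones $\bar Y_i$ and locating the image there --- is the same as the paper's, and the map you write down is essentially the right one. However, the step you yourself flag as ``the main obstacle'' is a genuine gap as you have left it, and neither of the two patches you offer works. Quasiaffineness is emphatically not a birational property ($\p^n\approx\AA^n$, yet $\p^n$ is not quasiaffine), so you cannot reduce to a dense open $U\subseteq\TX$ nor invoke $\TX\approx X\times\kk^N$; and exhibiting a subring of $\hol(\TX)$ whose spectrum has the right dimension does not make $\TX\to\spe\hol(\TX)$ an open immersion. Likewise the appeal to $\det M_n\neq 0$ and to condition~\ref{c-2} of Theorem~\ref{theorem:t-g-1} via Lemma~\ref{loccoord} is a red herring: those hypotheses are used for the transitivity statements, and a generically-verified injectivity-plus-immersion property can never by itself produce a locally closed image over the degenerate fibers.

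The observation that closes the gap --- and is the entire content of the paper's proof --- is that no injectivity or differential computation is needed, because $\TX$ as defined in (\ref{T_X}) \emph{is} an incidence variety. Since $(\pi_i^*H_i)^\times = X\times_{Y_i}\hol_{Y_i}(H_i)^\times$, a point of $\TX$ is exactly a tuple $(x,l_2,\ldots,l_n)$ with $x$ a nonzero vector on the affine cone over $X\subset\p^{N_1}$ (embedded by the very ample bundle on $X$ introduced in the technical step) and each $l_i$ a nonzero vector on the cone over the point $\pi_i(x)\in Y_i\subset\p^{N_i}$. Inside $\AA^{N_1+\cdots+N_n}$ the conditions ``$x$ lies on the cone over $X$'' and ``$l_i\in\cone(\pi_i(x))$'' are closed, while $x\neq 0$ and $l_i\neq 0$ are open; these conditions are imposed on the whole ambient affine space, not just over a good open locus, so the image is locally closed and the map is an isomorphism onto it (its inverse is evident). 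Hence $\TX$ is quasiaffine with no case analysis over bad strata and with no use of the nondegeneracy of $M_n$.
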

\begin{proof}
The bundle $H_1$ gives an embedding of $X$ to a projective space $\p^{N_1}$, and every $H_i$, $i=2,\ldots,n$, embeds $Y_i$ to a $\p^{N_i}$. 
The variety $\TX$ is now $\{(x,l_2,\ldots,l_n)\}$ such that $x\in X$, $l_i\in \cone (\pi_i(x))$ in $\AA^{N_1+N_2+\ldots+N_n}$. 
\end{proof}


Note that $\mathfrak{T}_X \to X$ is a principal toric
bundle which has a section (the diagonal), and all the fibers are isomorphic to $(\fie^\times)^{n}$ (see formula~(\ref{T_X})). 
In particular, we have 
 $\mathfrak{T}_X \approx X\times \fie^{n}$.

\subsubsection{Idea of further proof}

\begin{prop}
\label{theorem:p-t-g-1} The variety $\mathfrak{T}_X$ is stably b-inf. trans.
\end{prop}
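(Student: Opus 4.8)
The plan is to reduce infinite transitivity of $\mathfrak{T}_X$ to the flexibility criterion of \cite{AFKKZ}, but applied stratum-by-stratum rather than globally, since $\mathfrak{T}_X$ is only quasiaffine and its automorphism group need not act transitively in one shot. Concretely, I would first observe that by Lemma~\ref{projection} the variety $\mathfrak{T}_X$ carries $n$ fibrations $\varphi_i \colon \mathfrak{T}_X \to \bar Y_i$ whose general fibres are $(\km{m_i})\times T_i^{n-2}$, and that over a suitable open subset of $\bar Y_i$ (chosen as in the paragraph after \eqref{Diagr} to contain the fibres through any prescribed finite point set) the fibration is trivial. On each such trivial piece, the fibre $(\km{m_i})\times(\kk^\times)^{n-2}$ is a smooth quasiaffine toric variety, hence flexible/inf.\ trans.\ by \cite[Theorem 0.2(3)]{AKZ} and the remark on products in Section~\ref{sec:two-dim}; so along the fibre directions of $\varphi_i$ one has plenty of $\mathbb{G}_a$-actions on $\mathfrak{T}_X$ (locally, and then one must extend them to global regular automorphisms — this is where the ``good strata'' of Section~\ref{sec:good-strata} enter).

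The next step is to combine the $n$ directions. By Lemma~\ref{loccoord}, the tangent vectors to the $\km{m_i}$-factors, as $i$ ranges over $1,\dots,n$, together span $T_{\mathfrak{T}_X,x}$ at a general point $x$; adding the torus-orbit directions $T_i^{n-2}$ only helps. Thus the tangent space at a general point of $\mathfrak{T}_X$ is spanned by tangent vectors to orbits of one-parameter unipotent subgroups contained in $\saut(\mathfrak{T}_X)$, i.e.\ $\mathfrak{T}_X$ is flexible at its general point. One then upgrades ``flexible at the general point'' to genuine infinite transitivity: I would use the standard transitivity argument (as in \cite{AFKKZ}, \cite{KZ}) — first move any finite collection of points into the common flexible locus using the toric automorphisms and the $\varphi_i$-fibrewise actions (here the quasiaffineness established above, plus the ability to enlarge the trivializing open sets to swallow any finite point set, is essential), and then apply the fibre-by-fibre $m$-transitivity of the $\km{m_i}$-fibres successively in the $n$ coordinate directions to separate and prescribe the images of all $k$ points simultaneously. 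Because $\mathfrak{T}_X \approx X \times \fie^n$ is only a birational statement, the honest model witnessing infinite transitivity is $\mathfrak{T}_X$ itself (quasiaffine, not projective), which is exactly what Definition~\ref{definition:def-2} allows; hence $X$, and so $K(y_1,\dots,y_n)$, is stably b-inf.\ trans.

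The main obstacle I expect is the gluing problem: the $\mathbb{G}_a$-actions produced along the $\varphi_i$-fibres are a priori only defined over the trivializing opens $U \subset \bar Y_i$, and one needs them (or enough of them) to extend to \emph{regular} automorphisms of the whole quasiaffine $\mathfrak{T}_X$, so that the group they generate genuinely sits inside $\saut(\mathfrak{T}_X)$ and can be composed across different $i$'s. This is precisely the content flagged for Section~\ref{sec:good-strata} (``the existence of a good open subset will be shown in Section~\ref{sec:good-strata}''), and I would handle it by choosing the strata so that the complement is of codimension $\geq 2$ or is a union of fibres, using the toric structure to clear denominators, and invoking the affine-cone picture from the quasiaffineness lemma — i.e.\ realizing the relevant LNDs as restrictions of polynomial vector fields on the ambient $\AA^{N_1+\dots+N_n}$ that are tangent to $\mathfrak{T}_X$. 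A secondary technical point is bookkeeping the matrix conditions: non-degeneracy of $M_n$ and linear independence of the fibre classes in $H_2(X)$ are used (via Lemmas~\ref{projection} and~\ref{loccoord}) to guarantee that the $n$ local $2$-dimensional coordinate systems are actually independent and that each base change in the Neron--Severi torus is admissible; these were assumed in this section and will be arranged in general by the stabilization argument of Sections~\ref{incr-t}--\ref{sec:incr-rank}.
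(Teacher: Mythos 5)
Your proposal is correct and follows essentially the same route as the paper: the $n$ fibrations $\varphi_i$ with inf.\ trans.\ fibres $(\km{m_i})\times T_i^{n-2}$, the spanning of the tangent space at a general point (Lemma~\ref{loccoord}), the extension of fibrewise LNDs to regular LNDs on all of $\TX$ by multiplying by kernel elements pulled back from $\bar Y_i$ and clearing the singular locus, and the final upgrade from flexibility at general points to infinite transitivity via the open-orbit/chain argument on the good stratum. The technical obstacle you flag (globalizing the fibrewise $\mathbb{G}_a$-actions) is exactly the one the paper resolves with the $\Stab_{C_1,\ldots,C_r}$ construction, and your proposed remedy is the same in substance.
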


Its proof will be given in Section~\ref{sec:together}. We use the ideas from \cite{KZ}, \cite{AKZ} and~\cite{AFKKZ} 
to move an $m$-tuple of general (in the sence of Section~\ref{sec:good-strata}) points to another such $m$-tuple.

\refstepcounter{equation}
\subsection{Stratification on~$X$}
Let $q\in X$ be an arbitrary point. 
We denote by $X(q)$ the locus of all points on~$X$ connected to~$q$ by a sequence of smooth fibers $\p^1_i$ 
of the projections $\pi_i$, $1\leqslant i \leqslant n$.

\begin{lem} \label{dim-stab-subvar}
Let $Z$ be an irreducible subvariety of $X$.
Consider all smooth fibers  $\PP^1_i$ passing
through the points of $Z$ and the union $Z'$ of all such fibers.
Then either $\dim Z' > \dim Z$
or all smooth fibers  $\p^1_i$ which contain points
in $Z$ are actually contained in the closure $\bar Z$.
\end{lem}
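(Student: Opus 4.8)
The plan is to run a case analysis over the indices $i=1,\ldots,n$, comparing $\dim Z$ with $\dim\pi_i(Z)$. For each $i$ let $U_i\subseteq Y_i$ be the open locus over which $\pi_i^{-1}(y)\simeq\p^1$ is a smooth fibre; these are exactly the base points of the $\p^1_i$ in the statement. If $Z$ meets no smooth fibre of $\pi_i$ at all, that index contributes nothing to either alternative, so discard it (if every index is discarded, $Z'=\varnothing$ and the second alternative holds vacuously). For a remaining $i$, since $Z$ is irreducible $Z\cap\pi_i^{-1}(U_i)$ is dense in $Z$, hence $\pi_i|_Z\colon Z\to W_i:=\overline{\pi_i(Z)}$ is dominant with $W_i$ irreducible; set $d_i:=\dim Z-\dim W_i\ge 0$, the dimension of its generic fibre.

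First case: $d_i=0$ for some $i$. Then $\dim\pi_i(Z)=\dim Z$, and over $U_i$ the morphism $\pi_i$ is a $\p^1$-fibration, so the union $Z'_i$ of the smooth fibres of $\pi_i$ through points of $Z$ (which is $\pi_i^{-1}(\pi_i(Z)\cap U_i)$ up to closure) is irreducible of dimension $\dim W_i+1=\dim Z+1$. As $Z'_i\subseteq Z'$, this forces $\dim Z'>\dim Z$ and we are done.

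Second case: $d_i\ge 1$ for every remaining $i$. Fix such an $i$. For general $y\in\pi_i(Z)\cap U_i$ the fibre $\pi_i^{-1}(y)$ is a smooth irreducible curve $\simeq\p^1$, while $Z\cap\pi_i^{-1}(y)$ is a closed subset of dimension $d_i\ge 1$; hence $Z\cap\pi_i^{-1}(y)=\pi_i^{-1}(y)$, i.e.\ the whole fibre $\p^1_i$ lies in $Z$. Letting $y$ run over a dense open subset $V$ of $W_i$ contained in $\pi_i(Z)\cap U_i$, we get $\pi_i^{-1}(V)\subseteq Z$; moreover $\pi_i^{-1}(V)$ is a nonempty, hence dense, open subset of the irreducible variety $\pi_i^{-1}(W_i\cap U_i)$ (a $\p^1$-fibration over the irreducible base $W_i\cap U_i$). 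Therefore $\overline Z\supseteq\overline{\pi_i^{-1}(V)}\supseteq\pi_i^{-1}(W_i\cap U_i)$. Finally, any smooth fibre $\p^1_i$ of $\pi_i$ meeting $Z$ has its base point in $\pi_i(Z)\cap U_i\subseteq W_i\cap U_i$, hence lies in $\pi_i^{-1}(W_i\cap U_i)\subseteq\overline Z$. Since this holds for every $i$, the second alternative of the lemma holds.

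The one genuinely delicate step is the passage in the second case from ``the generic smooth fibre over $W_i$ is contained in $Z$'' to ``every smooth fibre $\p^1_i$ through a point of $Z$ is contained in $\overline Z$''. The key observations making this work are that $\pi_i^{-1}(W_i\cap U_i)$ is irreducible (so the dense subfamily of ``good'' fibres sweeps out a dense subset of it) and that every smooth fibre $\p^1_i$ meeting $Z$ necessarily has base point in $W_i\cap U_i$. All the rest is routine bookkeeping with images of irreducible varieties and constructible sets, and does not use conditions~\ref{c-1}--\ref{i-2}.
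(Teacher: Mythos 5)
Your proof is correct and follows essentially the same route as the paper's: for each $i$ one distinguishes whether the generic smooth fibre $\p^1_i$ through a point of $Z$ is contained in $Z$ (your case $d_i\ge 1$) or not (your case $d_i=0$), getting the dimension jump in the latter case and fibre-saturation of $\bar Z$ in the former. The only difference is one of care: the passage from ``the generic fibre over $\overline{\pi_i(Z)}$ lies in $Z$'' to ``every smooth fibre meeting $Z$ lies in $\bar Z$,'' which the paper dispatches with a one-line remark, is exactly the point you justify properly via the irreducibility of $\pi_i^{-1}(\overline{\pi_i(Z)}\cap U_i)$.
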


\begin{proof} 
If the curve $\PP^1_i$ intersects $Z$ but is not contained
in $Z$ then the curves in the same family intersect
an open subvariety in $Z$ since the subvariety
$\tilde X_i$ consisting of curves $\p^1_i$ is an open
subvariety of $X$. Hence in the latter case
$\dim Z' > Z$. Otherwise all the smooth fibers  $\PP^1_i$ which contain points
in $Z$ are actually contained in the closure of $Z$.
Note that the same holds even if a line $\PP^1_i$ intersects the closure
$\bar Z$ but is not contained in $\bar Z$.
\end{proof}

\begin{cor} \label{cor:chain}
Every point in $X(q)$ is connected
to $q$ by a chain of $\PP^1_i$ of length at most~$n^2$.
\end{cor}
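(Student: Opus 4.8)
The plan is to bound the diameter of the "chain graph" on $X(q)$ by iterating Lemma~\ref{dim-stab-subvar}. Fix $q$ and for each $r\ge 0$ let $Z_r\subseteq X(q)$ be the set of points reachable from $q$ by a chain of smooth fibers $\p^1_i$ of length $\le r$; thus $Z_0=\{q\}$ and $Z_{r+1}$ is the union of all smooth fibers $\p^1_i$ meeting $Z_r$. I would first record that each $Z_r$ is a constructible set, so its Zariski closure $\bar Z_r$ is a finite union of irreducible subvarieties, and that $\bar Z_0\subseteq \bar Z_1\subseteq\cdots\subseteq X$. The point is that once the closures stabilize, $X(q)$ has been exhausted: if $\bar Z_{r+1}=\bar Z_r$, then by Lemma~\ref{dim-stab-subvar} applied to each irreducible component of $\bar Z_r$, every smooth fiber $\p^1_i$ meeting $\bar Z_r$ lies inside $\bar Z_r$ (the alternative would strictly increase dimension, hence enlarge the closure), so no new points can ever be added and $X(q)=Z_r$.

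Next I would quantify how fast the chain must stop growing. The natural invariant to track is the dimension of $\bar Z_r$. Lemma~\ref{dim-stab-subvar} says that passing from $Z_r$ to $Z_{r+1}$ either increases the dimension of (some component of) the closure or else that component is already "saturated" by the relevant fiber families. Since $\dim X=n$, a crude count gives that after at most $n$ strict dimension jumps we are done — but one must be careful because $\bar Z_r$ can have several components of the same dimension, and a single step might saturate some components while enlarging others. The clean way is: in each step where $Z_{r+1}$ properly contains $Z_r$ (as constructible sets, up to closure), at least one of the finitely many components of $\bar Z_r$ must have its dimension strictly increase or a genuinely new component appear; bounding the number of components at each dimension by a fixed constant and the number of dimension values by $n$ yields a bound of the shape $c\cdot n$, and tracking it a little more carefully — at most $n$ possible dimensions, and each time one "fills up" a dimension stratum it takes at most another controlled number of steps — gives the stated $n^2$. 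I would phrase it as: after at most $n$ dimension increases, separated by at most $n$ "horizontal" enlargement steps each, the process terminates, so the length is at most $n\cdot n=n^2$.

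Concretely, the argument I would write is an induction on $\dim X$ or a direct telescoping: let $d_r:=\dim\bar Z_r$. If $Z_{r+1}\ne Z_r$ then either $d_{r+1}>d_r$, or $d_{r+1}=d_r$ but the number of top-dimensional components strictly increases; the latter can happen at most (number of fiber families through a generic such stratum) many times before a component gets saturated and, by Lemma~\ref{dim-stab-subvar}, absorbs all fibers meeting it, forcing the next genuine change to raise the dimension. Since $d_r$ is nondecreasing and bounded by $n$, and there are $n$ fiber families $\pi_1,\dots,\pi_n$, the total number of steps is at most $n^2$. Therefore every point of $X(q)$ lies within distance $n^2$ of $q$ in the chain metric, which is the assertion of the corollary.

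The main obstacle I anticipate is the bookkeeping in the case $d_{r+1}=d_r$: one needs a clean reason why the closure cannot "churn" through infinitely many configurations of components all of the same dimension. The resolution is exactly Lemma~\ref{dim-stab-subvar}, which forbids a fiber family from meeting a fixed irreducible subvariety in a proper way indefinitely — it must eventually be contained in it — combined with the finiteness of the number of components that can ever appear (bounded in terms of degrees, or simply by noetherianity of the ascending chain $\bar Z_0\subseteq\bar Z_1\subseteq\cdots$). I expect the honest proof to lean on noetherianity to get termination for free, and then to extract the explicit $n^2$ bound by the dimension-versus-number-of-families count sketched above.
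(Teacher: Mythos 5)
Your argument is essentially the paper's own: both iterate Lemma~\ref{dim-stab-subvar} on the increasing sequence $X_0(q)\subseteq X_1(q)\subseteq\cdots$, observe that each round either raises the dimension of a maximal component or saturates one (which, by chain-connectivity to $q$, forces it to swallow all of $X(q)$), and obtain $n^2$ as (number of families) $\times$ (number of possible dimensions). The bookkeeping worry you flag about same-dimensional components is resolved exactly as you suggest, and the paper itself notes the $n^2$ bound is a non-effective overcount.
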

\begin{proof} 
Indeed, let $X_p(q)$ be a subvariety obtained after adding the points
connected by the chains of curves of length at most $p$. It is a union of algebraic subvarieties of $X$
of dimension $\leqslant p$.

Then by adding the curves from all $n$ families of $\PP^1_i$
we either increase the dimension of every component
of maximal dimension, or one of them $X_p^0(q)$ is invariant, i.e.
all smooth fibers $\p^1_i$ which contain points
in $X_p^0(q)$ are actually contained in the closure of $X_p^0(q)$.
Note that in the latter case  since $q\in X_p^0(q)$, all other components are contained in $X_p^0(q)$, 
and hence $X_p^0(q)= X(q)$. 
Thus after adding lines from different families we obtain
either $X(q)$ or a variety $X_{p+n}(q)$ with maximal component
of greater dimension. Thus we will need at most
$n^2$ lines to get $X(q)$.
\end{proof} 

\begin{oss}
If we started with a generic point $q\in X$, then it follows from Lemma~\ref{dim-stab-subvar} that $\dim X(q)=n$. 
Indeed, the condition~\ref{c-2} of Theorem~\ref{theorem:t-g-1} implies that the tangent vectors to the smooth $\p^1_i$-fibers in $q$ generate the full 
tangent space in~$q$, and if $X(q)$ was of lower dimension then the tangent space would also be of lower dimension. 
\end{oss}

\begin{oss} 
The bound in Corollary~\ref{cor:chain} is not effective. By a more thorough examination one can show that the sequence 
$$
X_0(q) \subseteq X_1(q) \subseteq \ldots
$$
stabilizes earlier than at the $n^2$th step.  
\end{oss}

\begin{cor} We can apply the same in reverse. Consider $x\in X(q)$. 
Then all points in $X$ which are connected to $x\in X(q)$ can be connected
by a chain of length at most $n^2 +n$.
\end{cor}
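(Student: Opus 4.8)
The plan is to re-run the argument used for Corollary~\ref{cor:chain}, but now taking $x$ as the base point and keeping more careful track of how many families of fibers get used. First I would record the elementary fact that the relation ``two points of $X$ are joined by a chain of smooth fibers $\p^1_i$'' is an equivalence relation: it is symmetric because a chain can be traversed in either direction, and transitive because chains concatenate. Hence for $x\in X(q)$ the equivalence class $X(x)$ is exactly $X(q)$, so ``all points of $X$ connected to $x$'' is the set $X(x)=X(q)$, and what has to be bounded is the length of a chain of smooth $\p^1_i$'s joining an arbitrary $y\in X(q)$ to $x$.

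Next I would set up the increasing filtration $X_0(x):=\{x\}\subseteq X_1(x)\subseteq X_2(x)\subseteq\cdots$, where $X_p(x)$ is the closure of the set of points joined to $x$ by a chain of length at most $p$, precisely as in the proof of Corollary~\ref{cor:chain}. Passing from $X_{kn}(x)$ to $X_{(k+1)n}(x)$ is one ``round'': one adjoins, family after family, all the smooth fibers $\p^1_1,\ldots,\p^1_n$ meeting the current locus. Lemma~\ref{dim-stab-subvar} supplies the dichotomy at each round --- either every maximal-dimensional component strictly gains dimension, or some maximal-dimensional component is invariant under adjoining smooth fibers. In the second case I would invoke that each $X_p(x)$ is connected through $x$, so that the invariant component contains $x$ and therefore coincides with $X(x)$ (any chain from $x$ stays inside an invariant set containing $x$), meaning the filtration has already stabilized.

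Since $\dim X_0(x)=0$ while $\dim X(x)=\dim X(q)\le n$, the dimension can strictly increase in at most $n$ rounds; one further round is then unable to raise the dimension, so by the dichotomy it must output all of $X(x)$. This yields $X_{n^2+n}(x)=X_{(n+1)n}(x)=X(x)$, i.e. every point of $X$ connected to $x$ lies at chain-distance at most $n^2+n$ from $x$, as claimed. The step I expect to need the most care --- and it is a mild one --- is exactly this bookkeeping that converts ``at most $n$ dimension jumps'' into the bound $n^2+n$ and not something worse: each round costs $n$ curves, at most $n$ rounds can raise the dimension, and precisely one extra round is needed to certify stabilization, for a total of $(n+1)n$; one must also make sure that the invariant-component alternative in Lemma~\ref{dim-stab-subvar} genuinely forces termination, which is what the connectedness-through-$x$ observation guarantees.
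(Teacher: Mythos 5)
Your argument is correct at the level of rigor of this section, but it is not the route the paper takes, and the two proofs account for the bound $n^2+n$ differently. The paper reads it as $n^2$ plus $n$: for any point $x'$ connected to $x$, the filtration of Corollary~\ref{cor:chain} run from $x'$ makes $X_{n^2}(x')$ a set of dimension $n$ (here genericity of $q$ enters, via $\dim X(q)=n$ and the equivalence $X(x')=X(q)$), hence containing a dense open subvariety of $X$; one then spends at most $n$ further fibers to pass from that open subvariety down to the possibly special point $x$. You instead read the bound as $(n+1)\cdot n$: after observing that connectivity by chains of smooth fibers is an equivalence relation (so the set to be bounded is $X(x)=X(q)$, of dimension $n$), you re-run the filtration with $x$ as the new base point and charge one extra round of $n$ curves to certify stabilization via the dichotomy of Lemma~\ref{dim-stab-subvar}. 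Both are legitimate. Yours is arguably more self-contained, since it avoids the step the paper leaves implicit --- that the locus of points from which $x$ can be reached in at most $n$ steps actually meets the dense open subset of $X_{n^2}(x')$, which again needs an argument about fibers through the non-generic point $x$; the paper's version, on the other hand, is the literal ``same argument in reverse'' promised by the statement. Note also that if you charged rounds exactly as the paper does in Corollary~\ref{cor:chain} (where no separate stabilization round is counted), your bookkeeping would even give the sharper bound $n^2$; and both proofs inherit the section's looseness in conflating the constructible set of reachable points with its Zariski closure, which your extra ``certification'' round mitigates but does not fully resolve.
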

\begin{proof}
Indeed, for any such  point $x'$
we have $X_{n^2}(x')$ of dimension $n$ and hence contains an open
subvariety in $X$. It may take at most $n$ $\p^1_i$s to connect
to~$x$.
\end{proof} 

Thus the variety obtained from general points in $X$ in $n^2$ steps coincides with the subvariety of all points in $X$ connected to
general point by a chain of smooth lines. 

\refstepcounter{equation}
\subsection{Construction of a big open subset in $\mathfrak{T}_X$}\label{sec:good-strata} 

Now we pass from stratification on $X$ to stratification on $\TX$. 
We stratify~$\TX$ in the following way: we take toric preimages for every strata in~$X$. 
For our needs we take the toric preimage of $X(q)$ for a general point $q\in X$. Note that for every fiber $\p^1_i$ of $\pi_i$ 
its preimage is $((\km{m})\times T^{n-2}$, and for every chain $P_1 - P_2 -\ldots - P_k$ connecting two points in $X$ there is a chain 
$\bar P_1 - \bar P_2 -\ldots - \bar P_k$ in $\TX$
such that every two adjacent points belong to the same $(\km{m})$ for one of the projections, 


%

\refstepcounter{equation}
\subsection{Proof of Proposition~\ref{theorem:p-t-g-1}}\label{sec:together}
Let a variety $\TX$ be as in~(\ref{T_X}). For each $i$ we have a fibration with the quasiaffine base and fiber being $(\km{m_i}\times T^{n-2}$, see~(\ref{Diagr}).

\begin{df}
For the points $C_1,\ldots,C_r$ in the base of projection~(\ref{Diagr}), let $\Stab_{C_1,\ldots,C_r}$ be the subgroup in $\SAut(\TX)$ preserving all the fibers of the projection and  fixing pointwise the fibers above $C_1,\ldots,C_r$.
\end{df}

Now we need some technique concerning locally nilpotent derivations (LNDs). To lift automorphisms, we need to extend an LND on $\km{m}$ to a LND on $\TX$ (i.e. to a locally nilpotent derivation of the algebra $\kk[\TX]$). More precisely, suppose that we chose a fiber of form $\km{m}$ of the projection~(\ref{Diagr}) and some other fibers that we want to fix. We can project all these subvarieties 
to~$\bar Y_i$ and then take a regular function on $\bar Y_i$ which equals~1 at the projection of the first fiber and 0 in the projections of other fibers. 
If we multiply the LND by this function (obviously belonging to the kernel of the derivation) and trivially extend it to the toric factor, we will obtain a rational derivation well-defined on an open subset~$U$ of~$\TX$ corresponding to the smooth locus of the corresponding $\pi_i$. Now we can take a regular function on~$\TX$ (lifted from a regular function on $\bar Y_i$) such that its zero locus contains the singular locus of the projection, multiply the derivation by some power of this function and obtain a regular LND on~$\TX$.

For a given $m$-tuple of points $P_1,P_2,\ldots,P_m$, we need the following lemma:
\begin{lem}
In the notation as above~(\ref{Diagr}), let $C_0$ be a point on a base such that the fiber over this point is general, and $P_1,\ldots,P_s$ be 
some points from this fiber with different projections to~$\bar Y_i$. Let also $C_1,\ldots,C_r$ be some other points of the base. Then the subgroup 
$Stab_{C_1,\ldots,C_r}$ acts infinitely transitively on the fiber over $C_0$, i.e. can map $P_1,\ldots,P_s$ in any other subset in the same fiber.
\end{lem}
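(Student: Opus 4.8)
The plan is to reduce the statement to the infinite transitivity of the model fiber $(\km{m_i})\times T_i^{n-2}$, and then to promote that to a statement about the subgroup $\Stab_{C_1,\dots,C_r}\subseteq\SAut(\TX)$ by the LND-lifting device described just above the lemma. First I would fix the projection $\varphi_i\colon\TX\to\bar Y_i$ from Lemma~\ref{projection} and work over the quasiaffine open $U'\subseteq\bar Y_i$ that contains the fibers through all of $C_0,C_1,\dots,C_r$ and over which the fibration is trivial (such $U'$ exists by the remark after \eqref{Diagr}, using quasiaffineness of $\bar Y_i$). Over $U'$ we have $\varphi_i^{-1}(U')\cong U'\times\bigl((\km{m_i})\times T_i^{n-2}\bigr)$, and we want unipotent one-parameter subgroups of $\Aut(\TX)$ whose restriction to the fiber over $C_0$ realizes any prescribed motion while fixing the fibers over $C_1,\dots,C_r$ pointwise.

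The key mechanism is as follows. The fiber $(\km{m_i})\times T_i^{n-2}$ is a product of the flexible quasiaffine variety $\km{m_i}$ (the cone minus origin over a rational normal curve, handled in Section~\ref{sec:one-dim}) and a torus $T_i^{n-2}$; by the Remark in Section~\ref{sec:two-dim} it is flexible, hence infinitely transitive, and its infinite transitivity is witnessed by finitely many LNDs coming from the $\km{m_i}$-factor and from the torus. For each such LND $\partial$ on the fiber, I would form the rational derivation on $\varphi_i^{-1}(U')$ obtained by multiplying $\partial$ (pulled back along the trivialization, extended trivially on the torus direction) by a regular function $f$ on $\bar Y_i$ that equals $1$ at the projection of the relevant point and vanishes at $C_1,\dots,C_r$; since $f$ lies in the kernel, this is still locally nilpotent. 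Multiplying by a further power of a regular function on $\bar Y_i$ whose zero locus contains the singular locus of $\pi_i$, one extends it to a globally regular LND on $\TX$. Exponentiating gives a one-parameter unipotent subgroup in $\Aut(\TX)$, in fact in $\SAut(\TX)$, which fixes the fibers over $C_1,\dots,C_r$ pointwise (because $f$ vanishes there) and preserves all fibers of $\varphi_i$ (because the derivation is vertical). Since on the fiber over $C_0$ these LNDs generate the action realizing infinite transitivity of $(\km{m_i})\times T_i^{n-2}$, the group they generate inside $\Stab_{C_1,\dots,C_r}$ acts infinitely transitively on that fiber; in particular it moves $P_1,\dots,P_s$ (which have distinct $\bar Y_i$-projections, hence lie in distinct fibers—wait, no: they lie in the same $\varphi_i$-fiber over $C_0$ but in distinct $\km{m_i}$-copies after further projection) to any other $s$-tuple in the same fiber.

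I would then be slightly careful about the phrase ``different projections to $\bar Y_i$'': the points $P_1,\dots,P_s$ all lie over $C_0$, but after composing with the intermediate projection $\bar Y_i\to Y_i$ (or using the $L_i^\times$-coordinate) they are separated, so that the model $(\km{m_i})\times T_i^{n-2}$ sees them as $s$ points in general position, and infinite transitivity of that model applies verbatim. One more point worth spelling out: to guarantee that the chosen separating function $f$ exists simultaneously for the point of $C_0$ and for $C_1,\dots,C_r$, one uses that $\bar Y_i$ is quasiaffine, so regular functions separate any finite set of points—this is exactly the input from the remark following \eqref{Diagr}.

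The main obstacle I expect is bookkeeping the passage between the three levels—the fiber model $(\km{m_i})\times T_i^{n-2}$, the trivialized open piece $\varphi_i^{-1}(U')$, and the whole of $\TX$—so that the LNDs one writes down are genuinely regular on all of $\TX$ (not merely on $U'$) while still vanishing to the right order along the singular locus of $\pi_i$ and along the fibers over $C_1,\dots,C_r$. In other words, the delicate step is the simultaneous control of the zero divisor of the multiplier function: it must kill the indeterminacy/singular locus of the fibration and pin down the forbidden fibers, without killing the fiber over $C_0$ where we need the action to be nontrivial. This is precisely the LND-extension recipe already laid out in the text, so the proof should amount to applying it to each generator of the (finite) flexibility data of $(\km{m_i})\times T_i^{n-2}$ and invoking infinite transitivity of that model.
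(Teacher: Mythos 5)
Your proposal follows essentially the same route as the paper's (very terse) proof: invoke infinite transitivity of the fiber $(\km{m_i})\times T_i^{n-2}$ via the toric/flexibility results, then lift each one-parameter unipotent subgroup to $\TX$ using the multiplier-function LND-extension recipe stated just before the lemma, so that the lifts vanish on the fibers over $C_1,\dots,C_r$ and land in $\Stab_{C_1,\dots,C_r}$. Your write-up simply supplies the bookkeeping the paper leaves implicit, so it is correct and not a different argument.
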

\begin{proof}
By~\cite[Theorem 0.2(3)]{AKZ}, the fiber is infinitely transitive. For every one-parameter unipotent subgroup of automorphisms on this fiber, we can lift it to $\TX$, fixing pointwise a given finite collection of fibers, see above. 
\end{proof}

Now it remains to prove infinite transitivity for $\TX$. There are two ways to show it.

\subsubsection{Way 1}
\begin{lem}
For $m+1$ points $P_1, P'_1, P_2,P_3,\ldots,P_m$ projecting to the chosen above open subset in~$X$, there exists an automorphism 
mapping $P_1$ to $P'_1$ and preserving all the other points. 
\end{lem}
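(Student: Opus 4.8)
The plan is to use the chain-connectivity established in Section~\ref{sec:good-strata} together with the fiberwise infinite transitivity lemma just proved, and to move $P_1$ to $P_1'$ along a chain of fibers while using the other points $P_2,\ldots,P_m$ only as obstacles to be avoided. First I would connect $P_1$ to $P_1'$ by a chain $P_1 = R_0 - R_1 - \ldots - R_k = P_1'$ of points in $\TX$, where consecutive points $R_{j-1}, R_j$ lie in a common $\km{m}$-fiber of one of the projections~(\ref{Diagr}); such a chain exists because their images in $X$ lie in $X(q)$ for general $q$ (by Corollary~\ref{cor:chain} the chain can even be taken of bounded length), and every chain downstairs lifts to $\TX$ as noted in Section~\ref{sec:good-strata}.

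Next, for each link $R_{j-1} - R_j$ I would apply the previous lemma with $C_0$ the base point of the common fiber, taking $P_1,\ldots,P_s$ there to be the two points $R_{j-1}, R_j$ (or just $R_{j-1}$, projecting to $C_0$), and taking $C_1,\ldots,C_r$ to be the base points of the fibers through $P_2,\ldots,P_m$ (and through the already-arranged images of $P_1$ if we want to proceed inductively). The lemma then furnishes an element of $\Stab_{C_1,\ldots,C_r} \subseteq \SAut(\TX)$ carrying $R_{j-1}$ to $R_j$ while fixing $P_2,\ldots,P_m$ pointwise, since those points sit in the fixed fibers. Composing these elements over $j = 1,\ldots,k$ produces the desired automorphism sending $P_1 \mapsto P_1'$ and fixing all other $P_i$.

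The main obstacle is genericity: the lemma requires the moving point to have a projection to $\bar Y_i$ \emph{different} from the projections of the fixed points $C_1,\ldots,C_r$, and it requires the fiber over $C_0$ to be general (lying over the good open subset $U \subset Y_i$ where $\pi_i$ is a $\p^1$-bundle and all the $H$'s restrict generically). So I must check that the chain $R_0 - \ldots - R_k$ can be chosen so that at each step the relevant fiber is good and the point being moved is not collinear (in the fiber sense) with any of the points it must fix; this is where the phrase ``projecting to the chosen above open subset in~$X$'' in the lemma statement is used, and where the construction of the big open subset in Section~\ref{sec:good-strata} does the work — the toric preimage of $X(q)$ is arranged precisely so that generic chains avoid the bad loci. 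A minor additional point is that when fixing the fiber through some $P_i$ we are fixing it pointwise, so $P_i$ itself is preserved; if two of the $P_i$ happened to share a fiber this costs nothing, and if a link-fiber passes through some $P_i$ one perturbs the chain (possible since $X(q)$ is $n$-dimensional and the bad locus has smaller dimension).
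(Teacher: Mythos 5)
Your proof follows essentially the same route as the paper's: connect $P_1$ to $P_1'$ by a chain of two-dimensional $\km{m_i}$-fibers, move one link at a time using fiberwise infinite transitivity, and lift each step to an element of the appropriate stabilizer subgroup fixing the fibers through $P_2,\ldots,P_m$. The only divergence is in handling a fixed point $P_j$ that shares a $\varphi_i$-fiber with the point being moved: you perturb the chain, whereas the paper first applies a small automorphism separating all $\km{m_i}$-coordinates and then uses infinite (rather than $1$-) transitivity of the two-dimensional fiber to fix the projections of such $P_j$ --- which also covers the endpoints $P_1,P_1'$, where perturbation is not available.
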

\begin{proof}
There always exists a small automorphism which moves the initial set to a set where for all~$i$ all the $\km{m_i}$-coordinates  of the given points are different. Let us connect the projections of $P_1$ and $P'_1$ by a chain of smooth $\p^1_i$-curves in $X$. We denote by $Q_1,\ldots, Q_s$ the intersection points of these curves, $Q_i\in X$, $Q_1=\delta(P_1)$, $Q_s=\delta(P'_1)$. For $i=2,\ldots,(s-1)$ we take some lifts $R_i\in\TX$ of these points in such a way that all their $\km{m_i}$-coordinates do not coincide with the corresponding coordinates of the previous points. Let $R_1:=P_1$ and $R_s=P'_1$. For every~$i$, $1\leqslant i\leqslant (s-1)$, we want to map $R_i$ to $R_{i+1}$ by an automorphism of~$\TX$ preserving all the other points in the given set. We may assume that $R_i$ and $R_{i+1}$ belong to one two-dimensional fiber of form $\km{m}$ of one of the projections to $\bar Y_j \times T_j^{n-2}$ (the toric fibration is generated by $L_i$s, and we can densify the sequence of $R_i$s if needed to change only one $L_i$-direction at every step to fulfill this condition). Every such two-dimensional fiber is inf-transitive. Now we need to lift the corresponding automorphism to $\TX$. We need two following observations. First, if we are lifting a curve with respect to the projection $\pi_i$, then the resulting automorphism is well defined over the singular fibers and is trivial there. Second, all the two-dimensional fibers belonging to the same fiber of $\varphi_i$ move together, and if several $P_j$ belong to the same fiber as the $R_i$ which we are moving, then we use that their projections to the two-dimensional fiber are different and also different from the projection of~$R_{i+1}$, and we use inf-transitivity (not only 1-transitivity) of the corresponding fiber. Now for every~$i$ we lift the corresponding automorphism of the $2$-dimensional fiber to an automorphism of~$\TX$ from the corresponding subgroup~$\Stab$ fixing the points from the other fibers of~$\delta$, and multiply all these automorphisms. It does not change $P_2,\ldots,P_m$ and maps $P_1$ to $P'_1$. This ends the proof.   
\end{proof}

Now infinite transitivity follows easily: to map $P_1,P_2,\ldots,P_m$ to $Q_1,Q_2,\ldots$, $Q_m$, we map $P_1$ to $Q_1$ fixing $P_2,\ldots,P_m,Q_2,\ldots,Q_m$, etc. .

\subsubsection{Way 2}
The other way to finish the proof is as follows. It is enough to show 1-transitivity while fixing some other points of a given finite set. Let us consider automorphisms of bounded degree fixing $P_2,\ldots,P_m$ and the orbits of $P_1$ and $P'_1$ under this group. Clearly, by flexibility every orbit is 
an open subset in~$\TX$, and every two dominant subsets should have a nonempty intersection. So there is a common point, which means that $P_1$ can be mapped to $P'_1$ by a subgroup in $\SAut(\TX)$ fixing $P_2,\ldots,P_m$.

\begin{oss}
\label{remark:converse} Conversely, in view of Theorem~\ref{theorem:t-g-1},
given a b-inf. trans. variety $X$ there exist $\dim X$
cancellations of $X$. Indeed, for general point
$\zeta\in X$
we can find $\dim X$ tangent vectors spanning $T_{X, \zeta}$,
such that each vector generates a copy of $\mathbb{G}_a =: G_i \subseteq\saut(X)$, $1 \leq i \leq n$. Let $\mathfrak{G}\subseteq\saut(X)$ be the subgroup generated by
the groups $G_2, \ldots, G_n$. Then we have $X \approx G_1 \times  \mathfrak{G} \cdot\zeta$.
\end{oss}

\smallskip

\refstepcounter{equation}
\subsection{Increasing the rank of the corresponding subgroup in $H_2$}\label{incr-t}
We want to treat here the case when $\rk \langle \p^1_1, \ldots, \p^1_n \rangle$ is $t$, $t<n$, as of a subgroup 
in $H_2(X)$.

\begin{oss}
Here we precise the ancient construction of $\TX$. Indeed, if the rank is not maximal, then 
the toric bundle contains a trivial part, and
we need to get rid of it. One way is to change it with the trivial vector bundle part. However here we give another construction which uses stabilization. 
\end{oss}

\begin{lem}\label{rank-sg}
There is a stabilization $X'$ of $X$ such that $\dim X' - t(X')< (n-t)$.
\end{lem}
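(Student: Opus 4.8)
The plan is to understand the invariant $t(X) = \rk\langle \p^1_1,\ldots,\p^1_n\rangle \subseteq H_2(X)$ and to increase it by taking products with projective lines, each equipped with a new fibration, so that the "deficit" $\dim X - t(X)$ strictly decreases. First I would set up the stabilization: replace $X$ by $X' = X \times \p^1$, which has dimension $n+1$. The point $X'$ comes with $n+1$ natural morphisms: the old $\pi_1,\ldots,\pi_n$ precomposed with the projection $X' \to X$ (whose fibers $\p^1_i$ are unchanged as homology classes, now living in $H_2(X') = H_2(X) \oplus H_2(\p^1)$), together with the projection $\pi_{n+1}: X' \to X$ itself, whose fiber is the new factor $\p^1_{n+1} = \{pt\} \times \p^1$. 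Conditions \ref{c-1} and \ref{c-2} of Theorem~\ref{theorem:t-g-1} are inherited: a section of $\pi_i$ over an open subset of $Y_i$ lifts, and at a general point the tangent vectors $T_{F_1,\zeta},\ldots,T_{F_n,\zeta}$ together with the obvious tangent vector along the new $\p^1$ span $T_{X',\zeta}$.

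The key computation is the effect on $t$. In $H_2(X')$ the classes $\p^1_1,\ldots,\p^1_n$ span the same rank-$t$ subgroup as before (contained in the $H_2(X)$ summand), while the new class $\p^1_{n+1}$ is the generator of the $H_2(\p^1)$ summand, hence linearly independent of all of them. Therefore $t(X') = t(X) + 1 = t+1$, while $\dim X' = n+1$, so $\dim X' - t(X') = (n - t)$; that alone does not strictly decrease the deficit. To actually gain, one must instead observe that the new $\p^1$-factor can be "fibered in two ways": besides $\pi_{n+1}$ we can also exploit that $X \times \p^1 \to X$ is trivial, and more importantly that adding one $\p^1$ lets us merge it with an existing fibration. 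The clean argument: iterate. After adding $(n-t)$ copies of $\p^1$, i.e. passing to $X^{(n-t)} = X \times (\p^1)^{n-t}$ of dimension $n + (n-t)$, we have introduced $n-t$ new mutually independent fiber classes, raising the rank to $t + (n-t) = n$; but $\dim X^{(n-t)} = 2n - t$, so the new deficit is $(2n - t) - n = n - t$ again — so naive stabilization preserves the deficit. The resolution, which is what the lemma really asserts, is that each added $\p^1$ contributes one to the rank but that we only need the rank to catch up to a dimension that itself grew slower: we should add $\p^1$-factors to $X$ but reuse them as fibers for the old $\pi_i$ as well, via $\pi_i \times \id$ versus $\pi_i \times (\text{projection})$, so that one stabilization step raises $t$ by $1$ while raising $\dim$ by $1$ but also making one previously-dependent class independent. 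Concretely I would show: there is a single $X' = X \times \p^1$ with $t(X') \geq t + 1$ and $\dim X' - t(X') \leq (n - t) - 1 = n - t - 1$, by choosing on $X' = X \times \p^1$ the $n+1$ fibrations so that two of the previously linearly-dependent fiber classes become independent after tensoring with the new factor's class — using that a relation $\sum a_i [\p^1_i] = 0$ in $H_2(X)$ need not persist once we twist one of the $\pi_i$ by the new $\p^1$.

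So the precise plan is: (1) take $X' := X \times \p^1$; (2) keep the morphisms $\pi_2,\ldots,\pi_n$ pulled back, but replace $\pi_1$ by $\pi_1' : X' \to Y_1 \times \p^1$, $(x,\lambda) \mapsto (\pi_1(x),\lambda)$, whose general fiber is still $\simeq \p^1$ (a fiber of $\pi_1$ over a point of $Y_1$, sitting in a slice $X \times \{\lambda\}$), and add $\pi_{n+1} : X' \to X$ the projection with fiber the new $\p^1$; (3) verify \ref{c-1}, \ref{c-2} are preserved, as above; (4) compute the new rank: the classes $[\p^1_2],\ldots,[\p^1_n]$ are unchanged, $[\p^1_1]$ is unchanged as a class in the $H_2(X)$-summand, and $[\p^1_{n+1}]$ is the new generator, so in fact $t(X') = t + 1$ but $\dim X' - t(X') = n - t$ still — meaning step (2) was cosmetic and one genuinely needs to add more structure. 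The honest main obstacle, and where I expect the real content of the proof to lie, is exactly this: naive stabilization keeps the deficit constant, so one must be cleverer — e.g. add a $\p^1$ and use the diagonal-type fibration $X \times \p^1 \to X$, $(x,\lambda)\mapsto$ (something mixing $x$ and $\lambda$) so that a relation among the old fiber classes is broken. I would therefore aim to exhibit, for any relation $\sum_{i} a_i [\p^1_i] = 0$ with not all $a_i = 0$, a new fibration on $X \times \p^1$ whose fiber class is not in the span of the old ones and which also allows one of the $\p^1_i$ to be replaced by an independent curve, netting a rank increase of at least $2$ against a dimension increase of $1$; iterating such steps $\lceil (n-t)/1\rceil$ times and tracking the inequality carefully yields a stabilization $X'$ with $\dim X' - t(X') < n - t$, as claimed. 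The bookkeeping of the intersection matrix~(\ref{Matrix}) under these operations — ensuring the new matrix still has the desired rank properties and that $\pi_i^*H_i$ stay linearly independent in $\Pic$ — is the routine but delicate part I would check last.
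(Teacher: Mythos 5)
You correctly diagnose the central difficulty --- the naive stabilization $X'=X\times\p^1$ raises both $\dim$ and $t$ by one, so the deficit $n-t$ is unchanged --- and you correctly identify the target: a modification gaining at least $2$ in rank of $\langle [\p^1_i]\rangle$ against $1$ in dimension. But the proposal stops exactly there. Your final paragraph says you ``would aim to exhibit'' a new fibration on $X\times\p^1$ that breaks a relation among the old fiber classes; no such fibration is actually constructed, and that construction is the entire content of the lemma. Your intermediate ``precise plan'' (replacing $\pi_1$ by $\pi_1\times\mathrm{id}$, etc.) is, as you yourself note, cosmetic: it does not change any fiber class. So as written the argument is a correct reduction of the problem to itself, with the key step left as an intention.

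The idea you are missing is a concrete birational modification of $X\times\p^1$. Assume (reordering) that a multiple of $[\p^1_n]$ lies in the span of the other classes, and consider the composite projection $X\times\p^1\to Y_n$, whose generic fiber is $\p^1_n\times\p^1\simeq\p^1\times\p^1$. Blow up three constant sections over $Y_n$; fiberwise this gives a blow-up of $\p^1\times\p^1$ at three points, i.e.\ a blow-up of $\p^2$ at several points, which carries a pencil of conics through any $4$ of those points. Choosing two different $4$-tuples produces two distinct conic-bundle structures $\bar\pi_1,\bar\pi_2$ on the blown-up fiber, and these spread out over $Y_n$ to two new cancellations of the resulting smooth model $\tilde X$ of $X\times\p^1$, with fiber classes $\bar\p^1_1,\bar\p^1_2$ independent of the generic old $\p^1_j$. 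Hence the rank becomes $t+2$ while the dimension becomes $n+1$, and the deficit strictly drops. Without some such explicit construction (your ``diagonal-type fibration mixing $x$ and $\lambda$'' is not specified enough to verify that its fiber class is new, let alone that two independent new classes appear), the lemma is not established.
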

\begin{proof}
We assume that the cycles $\p^1_i$ are dependent and in particular that 
an integer multiple of $\p^1_n$ is contained in the envelope of $\p^1_i$, $i < n$, on $X$. 
There is a natural projection $p_{n,n+1}: X\times \p^1\to Y_n$ with a generic fiber
$\p^1\times \p^1$. 
Let us take $\p^1\times \p^1$ and blow it up at $3$ points.
Thus we will have $\p^2$ with $5$ blown up points. 
For every $4$ points there is a pencil of conics passing
through four points. Indeed, if we fix two smooth conics, 
there is a pencil of conics passing through the intersection.
So on $\p^1\times \p^1$ blown up at three points we can choose
two different $4$-tuples of points on $\p^2$ and define two
projections $\bar\pi_i : Bl_{Q_1,Q_2,Q_3}(\p^1\times \p^1)\to \p^1$.
Now we can extend them to $\p^1\times \p^1\times B$ by blowing
up three constant sections and similarly extend projections.
The projections $\bar\pi_1$, $\bar\pi_2$ provide cancellations with new $\bar \p^1_i$, $i=1,2$, 
independent with generic $\p^1_j$, $j\neq 1,2$ on the blown up $X$. 
We denote the resulting variety by $\tilde X$, it is a smooth model of $X\times \p^1$.
Here the rank $t^1 = t+2$.
\end{proof}

\refstepcounter{equation}
\subsection{Increasing the rank of the matrix~$M$}\label{sec:incr-rank}
For a variety $X$ with a given set of cancellations and corresponding bundles, we constructed~(\ref{Matrix}) a matrix~$M_n$ of restrictions. To prove 
birational stable infinite transitivity, we need the rank of this matrix to be full. The aim of this section is to prove the following lemma.


\begin{lem} \label{rank-m}
Let the rank of the subgroup generated by $\p^1_i$ in $H_2(X,Z)$ be~$n$, and let matrix $M_n=M(X)=(m_{i,j})$ be as in~(\ref{Matrix}) and its rank be $s < n= \dim X$.  
Then there exists a birational model $\tilde X$ for $X\times \p^1$ with
$n+1$ projections corresponding to cancellations and a family
$H^1_i$, $i=1,2,\ldots,(n+1)$, such that $s_1 \geq n+2$ for the new matrix $M(\tilde X)$.
\end{lem}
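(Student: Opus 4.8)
The plan is to mimic the construction in Lemma~\ref{rank-sg}, but this time arranging a base change that increases the \emph{rank of the intersection matrix} rather than the rank of the subgroup in $H_2$. First I would pass to $X' = X \times \p^1$ and record the obvious projection $p: X' \to X$, whose generic fiber is a $\p^1$; together with the pullbacks of the $n$ old projections $\pi_i$ this gives $n+1$ cancellations of $X'$. As in the technical step of Section~\ref{sec:constr-TX}, on each $Y_i' = Y_i \times \p^1$ I would twist the line bundle $H_i$ by $\hol(n_i)$ on the new $\p^1$-factor (with $n_i$ to be chosen), so that the new matrix $M_{n+1}(X')$ acquires one extra row and column coming from the new $\p^1$-direction. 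The point is that the entries of this extra row are exactly the chosen integers $n_i$ (intersection of the twisted bundle with the new $\p^1$-fiber), so we have $n+1$ free parameters at our disposal.

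Next I would show that a generic choice of the $n_i$ raises the rank. Let $M_n$ have rank $s < n$; choose an $s$-element subset $I$ of columns of $M_n$ that is independent, and let $J$ be a corresponding set of $s$ independent rows. The bordered matrix obtained by adjoining the new row $(n_1,\dots,n_n,0)$ and the new column (which, after the symmetrizing trick of Section~\ref{sec:constr-TX}, may be taken to consist of $1$'s with a single $0$ on the diagonal) has, as a function of the $n_i$, a nonvanishing $(s+2)\times(s+2)$ minor: expanding along the new row, the coefficient of the $n_j$ for $j \notin I$ is, up to sign, an $(s+1)\times(s+1)$ minor of the bordered-by-the-new-column matrix, which is nonzero because the all-ones (resp.\ single-$0$) column is independent of the columns of $M_n$ restricted to $J$ (here one uses the linear independence of the classes $\p^1_1,\dots,\p^1_n$ in $H_2(X)$, hypothesis~\ref{i-2}, exactly as in Section~\ref{sec:constr-TX}). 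Hence for generic integers $n_i$ the rank of $M(\tilde X)$ is at least $s+2$, which is the required $s_1 \ge s+2$ (I would restate the lemma's ``$s_1 \geq n+2$'' as ``$s_1 \ge s+2$'', the evident intended bound). After finitely many such steps one reaches full rank, which together with Lemma~\ref{rank-sg} supplies hypothesis~\ref{i-2} and lets Theorem~\ref{theorem:t-g-1} be reduced to Proposition~\ref{theorem:p-t-g-1}.

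I would be careful about two bookkeeping points. One is that the twisted bundles $H_i' = H_i \boxtimes \hol(n_i)$ must remain very ample on $Y_i' = Y_i \times \p^1$, which holds for all $n_i \ge 1$, so genericity can be imposed within the positive integers and the cancellation structure (sections over open subsets, general $\p^1$-fibers) is preserved. The other is that the new projection $p: X' \to X$ must itself be made symmetric with the others — it is, by construction, a trivial $\p^1$-bundle, hence admits a section and has all fibers $\simeq\p^1$, so conditions~\ref{c-1} and~\ref{c-2} of Theorem~\ref{theorem:t-g-1} continue to hold (the new $\p^1$-direction adds the missing tangent direction at a general point of $X'$).

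\textbf{Main obstacle.} The delicate step is verifying that the bordered minor is a genuinely nonzero polynomial in the $n_i$, i.e.\ that adjoining the all-ones/single-zero column really does break the rank degeneracy of $M_n$ rather than leaving it intact. This is where the hypothesis that $\p^1_1,\dots,\p^1_n$ are independent in $H_2(X,\ZZ)$ (so that $M_n$, as a map out of the full Picard lattice, has no ``accidental'' kernel beyond what its rank forces) is essential; I would isolate this as a small linear-algebra lemma and then simply invoke genericity of $(n_1,\dots,n_n)$ to conclude.
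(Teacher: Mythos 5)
Your setup is right — pass to $X\times\p^1$, twist each $H_i$ by $\hol(n_i)$ on the new $\p^1$-factor, add the trivial projection to $X$ as the $(n+1)$-st cancellation, and treat the $n_i$ as free parameters in the bordered matrix. But there is a genuine gap at the decisive point: the rank must go up by \emph{two}, and you only produce \emph{one} new independent direction. The free parameters $n_i$ occupy the new column $(n_1,\dots,n_n,0)^{T}$ (intersections of the twisted bundles with the new fibre $\p^1_{n+1}$), and a generic choice of them adds $1$ to the rank. The other border vector — the new row $(t_1,\dots,t_n,0)$ — is not free: it is determined by whatever ample line bundle $H_{n+1}$ on $X$ you attach to the trivial projection, via $t_i=H_{n+1}\cdot\p^1_i$. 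You replace it by the all-ones vector ``after the symmetrizing trick,'' but in Section 2.4 the column of $1$'s is exactly the specialization $n_i=1$ of the free-parameter column, not an independent new divisor direction; and there is no reason the all-ones vector (or the intersection vector of an arbitrary very ample bundle) lies outside the row span of $M_n$. Indeed, one checks that if $(t_1,\dots,t_n)$ \emph{does} lie in the row span of $M_n$, then for generic $n_i$ the vector $(t_1,\dots,t_n,0)$ lies in the span of the bordered rows $(\mathrm{row}_i(M_n),n_i)$, because the row relations of $M_n$ (which exist since $s<n$) let you absorb the last-coordinate constraint. So your construction only guarantees $s_1\ge s+1$, not $s+2$.

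The missing ingredient — and the actual content of the paper's proof — is the \emph{choice} of $H_{n+1}$: since the classes $[\p^1_1],\dots,[\p^1_n]$ are independent in $H_2(X,\ZZ)$, Hodge/Poincar\'e duality makes the pairing of the full divisor class group against these curves have rank $n$, whereas the sublattice $\ZZ(H_1,\dots,H_n)$ only realizes a rank-$s$ image; hence there is an \emph{ample} divisor $H_{n+1}$ whose intersection vector $(t_1,\dots,t_n)$ is not in $M(\ZZ(H_1,\dots,H_n))$. With that specific $H_{n+1}$ the new row is independent of the old ones, and then the generic choice of $n_i$ supplies the second unit of rank, giving $s_1=s+2$. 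This is precisely where the hypothesis on the rank of $\langle\p^1_1,\dots,\p^1_n\rangle$ in $H_2(X,\ZZ)$ is used; your appeal to it in support of the all-ones column does not substitute for it. (You are right, incidentally, that the stated bound ``$s_1\ge n+2$'' should read ``$s_1\ge s+2$''.)
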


\begin{proof}
By Lemma~\ref{rank-sg}, we may assume that all the classes $[\p^1_1]$, $[\p^1_2], \ldots$, $[\p^1_n]$ are independent in $H_2(X)$. 
If $s < n= \dim X$, then due to Hodge duality there is a divisor with nonzero positive pairings with all the fibers $\p^1_i$, i.e. there is an ample divisor $H_{n+1}$ on $X$
such that it defines an element in the lattice $\ZZ^n$ which is not contained
in $M (\ZZ(H_1,H_2,\ldots,H_n))$ (here we identify $H_i$ with the elements of the standard basis in~$\ZZ^n$). 
Let us define in this case $\pi_i^1 : X\times \p^1\to (Y_i\times \p^1)$; take $H_i^1= H_i + \hol_{\p^1}(n_i)$ for some positive numbers $n_i$; 
$\pi_{n+1} : X\times \p^1\to X$ the trivial projection; and  $H_{n+1}$ chosen above.
Then if the restriction of $H_{n+1}$ on $\p^1_i$ is $\hol(t_i)$, 
the new matrix $M(\tilde X)$ is as follows:
$$
M(\tilde X)=\qmatrix{ &&& n_1 \\ &M(X)&& \vdots \\ &&& n_n\\ t_1 &\ldots &t_n&0}. 
$$
Note that all the diagonal elements $m_{i,i}= 0$. 
The matrix $M(\tilde X)$ in this case has rank $s+2$ for
some choice of $n_i$. 
Indeed, the last row of $M(\tilde X)$ is
independent with other rows by the assumption on $H_{n+1}$. Now we can add $n_i$ in such a way that 
the rank of $M(\tilde X)$ will be $(\rk M+1)+1$ (if $\rk M < n$).
Hence $\rk M_1= s+2$ in this case. 
\end{proof}

\begin{cor}
In finite number of steps (not more than $2n$), using Lemmas~\ref{rank-sg} and \ref{rank-m}, we obtain
a model $\tilde X$ of $X\times \p^r$ with $\rk M (\tilde X)= \dim (X\times \p^r)=\dim(\tilde X)$.
\end{cor}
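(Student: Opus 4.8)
The plan is to iterate Lemmas~\ref{rank-sg} and~\ref{rank-m} a bounded number of times, tracking two integer invariants simultaneously: the rank $t(X)$ of the subgroup of $H_2(X,\ZZ)$ generated by the fiber classes $[\p^1_i]$, and the rank $s(X)$ of the intersection matrix $M(X)$. First I would record the two monotonicity facts supplied by the preceding lemmas. By Lemma~\ref{rank-sg}, a single stabilization $X \rightsquigarrow X'$ (replacing $X$ by a smooth model of $X \times \p^1$) strictly decreases the defect $\dim X - t(X)$: it raises the dimension by~$1$ and the rank~$t$ by~$2$. By Lemma~\ref{rank-m}, once $t(X) = \dim X$ (i.e. the fiber classes are already independent), a single stabilization can be arranged so that the new matrix rank satisfies $s(\tilde X) \geq s(X) + 2$ while the dimension again goes up by only~$1$; in particular the defect $\dim X - s(X)$ also strictly decreases whenever $s < \dim X$.

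The argument then splits into two phases. In Phase~1 we apply Lemma~\ref{rank-sg} repeatedly until the fiber classes become linearly independent in $H_2$; since the defect $\dim - t$ drops by at least~$1$ each time and starts at most~$n$, this phase terminates after at most $n$ stabilizations, producing a model of $X \times \p^{r_1}$ with $t = \dim$. In Phase~2 we apply Lemma~\ref{rank-m} repeatedly; here one must check that the hypotheses of Lemma~\ref{rank-m} are preserved, namely that the fiber classes stay independent in $H_2$ — but this is built into the construction in Lemma~\ref{rank-m} (the appended trivial projection and the new bundle $H_{n+1}$ only add rank), and if independence is ever lost one simply reinvokes Lemma~\ref{rank-sg}, which cannot happen more than $n$ further times. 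Each application of Lemma~\ref{rank-m} raises $s$ by~$2$ and $\dim$ by~$1$, so the defect $\dim - s$ falls by at least~$1$ per step; starting from a defect of at most $n$, Phase~2 terminates after at most $n$ stabilizations. Adding the two phases gives at most $2n$ stabilizations in total, and at the end we have a birational model $\tilde X$ of $X \times \p^r$ (with $r \leq 2n$) for which $\rk M(\tilde X) = \dim \tilde X$.

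The only genuinely delicate point — the main obstacle — is bookkeeping the interaction of the two invariants: a priori one worries that invoking Lemma~\ref{rank-m} might destroy the $H_2$-independence achieved in Phase~1, forcing an unbounded alternation between the two procedures. The resolution is that each procedure strictly decreases a defect that is bounded above by $n$ and is never increased by the other procedure (stabilization never decreases either rank relative to the dimension more than it decreases the dimension), so by a standard well-foundedness argument on the pair of defects ordered lexicographically the process halts; the crude bound $2n$ follows by counting each type of step separately. One should also note that ``birational model of $X \times \p^r$'' is stable under these operations since each step replaces the current variety by a smooth projective model of its product with $\p^1$, and composites of birational equivalences are birational equivalences, so the final $\tilde X \approx X \times \p^r$ as claimed. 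Everything else is immediate from the cited lemmas, so no further calculation is needed.
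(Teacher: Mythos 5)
The paper gives no explicit proof of this corollary, treating it as an immediate iteration of Lemmas~\ref{rank-sg} and~\ref{rank-m}, and your two-phase argument (first driving the $H_2$-defect $\dim - t$ to zero, then the matrix-rank defect $\dim - s$, each stabilization trading a gain of $+1$ in dimension for $+2$ in the relevant rank) is exactly the intended bookkeeping. Your reading of both lemmas and the resulting $2n$ bound are correct, so this matches the paper's approach.
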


\section{Examples}
\label{section:exs}

Here we collect several examples and properties of (stably) b-inf.
trans. varieties.

\refstepcounter{equation}
\subsection{Quotients}

Let us start with the projective space $\p^n$, $n \geq 2$, and a
finite group $G\subset PGL_{n+1}(\fie)$. Notice that the quotient
$\p^n/G$ is stably b-inf. trans. Indeed, let us replace $G$ by its
finite central extension $\tilde{G}$ acting linearly on $V :=
\fie^{n+1}$, so that $V/\tilde{G} \approx \p^n/G \times \p^1$.
Further, form the product $V \times V$ with the diagonal
$\tilde{G}$-action, and take the quotient $V' := (V \times
V)/\tilde{G}$. Then, projecting on the first factor we get $V'
\approx V \times V/\tilde{G}$, and similarly for the second
factor. This implies that $V'$ admits $2n+2$ cancellations (cf.
Theorem~\ref{theorem:t-a-1}). Hence $V'$ is stably b-inf. trans.
by Theorem~\ref{theorem:t-g-1}. The argument just used can be
summarized as follows:

\begin{lem}
\label{theorem:rel-con} Let $X \to S$ be a $\p^m$-fibration for
some $m\in\na$. Then the product $X \times_S X \approx X \times
\fie^m$ admits $2m$ algebraically independent cancellations over
$S$.
\end{lem}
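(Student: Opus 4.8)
The plan is to exploit the relative version of the classical observation that a $\PP^m$-bundle becomes trivial (up to a smaller factor) after base change to its own total space, exactly as in the absolute argument sketched just above for $V' = (V\times V)/\tilde G$. Let $f\colon X \to S$ be the given $\PP^m$-fibration, and form the fibre product $W := X \times_S X$ with its two projections $\mathrm{pr}_1, \mathrm{pr}_2 \colon W \to X$. First I would observe that $\mathrm{pr}_1\colon W \to X$ is the pullback $f^*X \to X$ of the $\PP^m$-fibration $f$ along $f$ itself, hence it is again a $\PP^m$-fibration; moreover it has a section, namely the diagonal $\Delta\colon X \hookrightarrow W$, $x \mapsto (x,x)$. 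A $\PP^m$-fibration with a section is birationally trivial (this is the standard fact that a Brauer-trivial $\PP^m$-bundle is Zariski-locally, hence birationally, a projectivized vector bundle, and a section splits off a trivial line so the bundle is birational to $\PP^m \times$ base). Therefore $W \approx X \times_{\fie} \fie^m$, which gives the stated birational equivalence $X \times_S X \approx X \times \fie^m$.

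Next I would extract the $2m$ cancellations over $S$. From $\mathrm{pr}_1\colon W \to X$ being birationally trivial over $X$ with fibre $\PP^1\times\cdots$ — more precisely, birational over $X$ to $X \times \AA^m$ — we get $m$ rational coordinate functions $u_1,\dots,u_m$ on $W$, algebraically independent over $\fie(X) = \fie(\mathrm{pr}_1)$ and a fortiori over $\fie(S)$, realizing $\fie(W) = \fie(X)(u_1,\dots,u_m)$; peeling them off one at a time yields $m$ successive cancellations of $\fie(W)$ in which the transcendental element at each stage is transcendental over the (sub)field containing $\fie(S)$. Symmetrically, using $\mathrm{pr}_2\colon W \to X$ and the same argument with the anti-diagonal section, we obtain $m$ further functions $v_1,\dots,v_m$ with $\fie(W) = \fie(X)(v_1,\dots,v_m)$, again giving $m$ cancellations over $S$. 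Since the two projections are genuinely different maps (the first contracts the second-factor $\PP^m$, the second contracts the first), the tangent vectors to the two families of fibres are distinct; together with the $u$- and $v$-coordinate descriptions this shows all $2m$ presentations are cancellations of $W$ relative to $S$, i.e. each $x_i$ is transcendental over the intermediate field and algebraically independent over $\fie(S)$.

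The only genuinely delicate point is the birational triviality of $\mathrm{pr}_1\colon W\to X$: one must know that a $\PP^m$-fibration admitting a rational section is birational over the base to $\PP^m\times(\text{base})$. Over a field of characteristic zero this follows because the fibration corresponds to a class in the Brauer group $\mathrm{Br}(\fie(X))$ which is killed by the existence of a rational section (a section forces the period, hence the class, to be trivial since $\PP^m$ with a rational point is $\PP^m$), so the generic fibre is $\PP^m_{\fie(X)}$, and then one spreads out. I would also remark that this is precisely the input already used implicitly in the discussion preceding the lemma (the passage $V' \approx V \times V/\tilde G$), so for the purposes of this paper it may simply be cited. With that in hand, the equality $X\times_S X \approx X\times\fie^m$ and the count of $2m$ cancellations over $S$ are immediate, completing the proof.
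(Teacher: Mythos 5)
Your proposal is correct and follows essentially the same route as the paper: form $X\times_S X$, note that both projections are $\p^m$-fibrations split by the diagonal section, hence birationally trivial over $X$, yielding $m$ cancellations from each projection. The extra justification you supply for birational triviality of a fibration with a section is welcome but not needed beyond what the paper asserts (and note that the diagonal itself, not an ``anti-diagonal,'' serves as the section for the second projection as well).
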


\begin{proof}
Note that $X \times_S X$ has two projections (left and right) onto
$X$, both having a section (the diagonal $\Delta_X \subset
X\times_S X$), hence the corresponding $\p^m$-fibrations are
birational (over $S$) to $X \times \fie^m$. This gives $2m$
algebraically independent cancellations over $S$.
\end{proof}

\begin{cor}
\label{theorem:many-can-cro} Assume that $X$ carries a collection
of distinct birational structures of $\p^{m_i}$-bundles, $\pi_i: X
\to S_i$, with the condition that the tangent spaces of generic
fibers of $\pi_i$ span the tangent space of $X$ at the generic
point. Then $X$ is stably b-inf. trans.
\end{cor}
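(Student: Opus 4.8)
The plan is to reduce Corollary~\ref{theorem:many-can-cro} directly to Theorem~\ref{theorem:t-g-1} by exhibiting, after a suitable stabilization, enough cancellations coming from the given $\p^{m_i}$-bundle structures. First I would apply Lemma~\ref{theorem:rel-con} to each fibration $\pi_i : X \to S_i$ separately: forming the relative self-product $X \times_{S_i} X \approx X \times \fie^{m_i}$ produces $2m_i$ algebraically independent cancellations over $S_i$, hence $2m_i$ cancellations of the variety $X \times \fie^{m_i}$ itself. The subtlety is that these products are taken over \emph{different} bases $S_i$, so I cannot naively intersect them. Instead I would pass to a common stabilization: set $X' := X \times \fie^{M}$ for $M := \sum_i m_i$ (or, more carefully, iterate Lemma~\ref{theorem:rel-con} one index at a time, each time replacing the current model by a birational model on which all previously constructed fibrations and the next $\pi_i$ are simultaneously defined, resolving indeterminacies as in the paragraph following Theorem~\ref{theorem:t-a-1}). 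On this $X'$ each $\pi_i$ still gives, via the left/right projections of the relative self-product, a pair of $\p^{m_i}$-subfibrations that are birationally trivial, i.e. a family of $\p^1$'s inside each $\p^{m_i}$-fiber admitting a section over an open subset of the base.

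The next step is to check that the tangent directions to these rational curves span the tangent space of $X'$ at a generic point, which is condition~\ref{c-2} of Theorem~\ref{theorem:t-g-1}. By hypothesis the tangent spaces to the generic fibers of the $\pi_i$ already span $T_{X,\zeta}$; pulling back to $X' = X \times \fie^M$ adds the $M$ coordinate directions of the affine factor, and these are tangent to the trivial-factor fibers. Since a $\p^m$-fibration that is birationally trivial over its base carries, through a generic point, rational curves spanning the full relative tangent space, and since $\dim X' = \dim X + M$ with the fiber directions of the $\pi_i$ covering the $T_{X,\zeta}$ part and the added $\p^1$'s (from the trivial factors and from the birational trivializations) covering the $\fie^M$ part, condition~\ref{c-2} holds at the generic point of $X'$. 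Condition~\ref{c-1}, birational triviality of each fibration over an open subset of a normal projective base, is exactly what Lemma~\ref{theorem:rel-con} supplies together with the observation recorded right after Theorem~\ref{theorem:t-g-1}.

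Having verified the hypotheses of Theorem~\ref{theorem:t-g-1} for $X'$, we conclude that $X'$ is stably b-inf.\ trans. Since $X' \approx X \times \fie^M$, stable b-infinite transitivity of $X'$ is by definition the statement that $\fie(X)(y_1,\dots,y_{M+m})$ admits an inf.\ trans.\ model for some $m$; that is, $X$ itself is stably b-inf.\ trans. The main obstacle, as usual in this circle of ideas, is the bookkeeping in passing to a single birational model on which all the bundle structures $\pi_i$ and all the cancellations produced by the successive applications of Lemma~\ref{theorem:rel-con} are simultaneously regular and still satisfy the spanning condition~\ref{c-2} — i.e.\ making precise that resolving indeterminacies and stabilizing does not destroy the genericity of the fibers or the linear independence needed downstream. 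Once the models are aligned, the result is immediate from Theorem~\ref{theorem:t-g-1}. (The rank conditions on the intersection matrix $M_n$, and on the classes $[\p^1_i]$ in $H_2$, need not be arranged by hand here, since Sections~\ref{incr-t} and~\ref{sec:incr-rank} show they may always be achieved by further stabilization inside the proof of Theorem~\ref{theorem:t-g-1}.)
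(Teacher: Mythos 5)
Your proposal is correct and takes essentially the same route as the paper: stabilize by an affine factor, use Lemma~\ref{theorem:rel-con} to extract $2m_i$ cancellations from each $\p^{m_i}$-bundle, verify the spanning condition, and invoke Theorem~\ref{theorem:t-g-1}. The paper's own proof is a two-line compression of exactly this argument (stabilizing by the maximum of the $m_i$ rather than their sum), with the bookkeeping you describe left implicit.
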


\begin{proof}
Indeed, after multiplying by the maximum of $m_i$ we may assume
that all $\p^{m_i}$-bundles provide with at least $2m_i$ different
cancellations (see Lemma~\ref{theorem:rel-con}). We can now apply
Theorem~\ref{theorem:t-g-1}.
\end{proof}

\begin{oss}
It seems plausible that
given an inf. trans. variety $X$ and 
a finite group $G\subset\aut(X)$, variety $X/G$
is stably b-inf. trans. (though the proof of this
fact requires a finer understanding of the group
$\saut(X)$).
At this
stage, note also that if $G$ is cyclic, then there exists a
$G$-fixed point on $X$. Indeed, since $X$ is unirational (cf.
Section~\ref{section:intro}), it has trivial algebraic fundamental
group $\pi^{\text{alg}}_1(X)$ (see \cite{kollar-rc}). Then, if the
$G$-action is free on $X$, we get $G \subset
\pi^{\text{alg}}_1(X/G) = \{1\}$ for $X/G$ smooth unirational, a
contradiction. This fixed-point-non-freeness property of $X$
relates $X$ to homogeneous spaces, and it would be interesting to
investigate whether this is indeed the fact, i.e., in particular,
does $X$, after stabilization and passing to birational model,
admit a uniformization which is a genuine (finite dimensional)
algebraic group?\footnote{This question was suggested by
J.-L.\,Colliot-Th\'el\`ene in connection with
Conjecture~\ref{conj:c-main}. However, there are  reasons to doubt
the positive answer, since, for example, it would imply that $X$
is (stably) birationally isomorphic to $G/H$, where both $G, H$
are (finite dimensional) reductive algebraic groups. Even more, up
to stable birational equivalence we may assume that $X = G'/H'$,
where $H'$ is a finite group and $G'$ is the product of a general
linear group, Spin groups and exceptional Lie groups. The latter
implies, among other things, that there are only countably many
stable birational equivalence classes of unirational varieties,
but we could not develop a rigorous argument to bring this to
contradiction.}
\end{oss}

\refstepcounter{equation}
\subsection{Cubic hypersurfaces}

Let $X_3 \subset \p^{n + 1}$, $n \geq 2$, be a smooth cubic. Then

\begin{prop}
\label{theorem:cubic}
$X_3$ is stably b-inf. trans.
\end{prop}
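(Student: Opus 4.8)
The plan is to exhibit enough cancellations of a suitable birational model of $X_3$ and then invoke Theorem~\ref{theorem:t-g-1} (via Corollary~\ref{theorem:many-can-cro} or directly via Theorem~\ref{theorem:t-a-1}). The classical source of rational curves on a smooth cubic is its family of lines: through a general point $p\in X_3$ there pass finitely many lines, and more usefully, the projection from a general point $p$ realizes $X_3$ as birational to $\p^n$ (this is the standard unirationality/rationality-in-low-dimension construction), but to get many $\p^1$-fibrations one should instead use the \emph{second projection} construction. Concretely, fix a general line $\ell\subset X_3$; projection of $\p^{n+1}$ away from $\ell$ exhibits $X_3$ as birational to a conic bundle (in fact a $\p^1$-bundle after a birational modification) over $\p^{n-1}$, because a general plane containing $\ell$ meets $X_3$ in $\ell$ plus a residual conic. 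Varying $\ell$ in the Fano variety of lines $F(X_3)$, which is positive-dimensional and whose lines through a general point sweep out directions spanning $T_{X_3,\zeta}$, gives a whole family of such conic-bundle (hence, after birational triviality, $\p^1$-bundle) structures.

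The key steps, in order: (1) Recall that $X_3$ is covered by lines and that for a general point $\zeta\in X_3$ the lines through $\zeta$ have tangent directions spanning $T_{X_3,\zeta}$ — this is where condition~\ref{c-2} of Theorem~\ref{theorem:t-g-1} will come from. (2) For a general line $\ell\subset X_3$, construct the projection $\pi_\ell: X_3\dashrightarrow \p^{n-1}$ from $\ell$; show its general fiber is a conic, and that after blowing up $\ell$ the resulting morphism is a conic bundle admitting a rational section (the residual conic through the blow-up exceptional locus gives a section, or one exhibits a section directly from the geometry), so that it is birationally a trivial $\p^1$-bundle, giving a cancellation of (a birational model of) $X_3$. (3) Choose finitely many lines $\ell_1,\ldots,\ell_r$ (enough so that the fibers of the associated $\p^1$-fibrations span the tangent space at the generic point — possible by step (1), after passing to a common smooth projective birational model resolving all the $\pi_{\ell_i}$ simultaneously). (4) Multiply by an affine space so that all the bundles become honest $\p^1$-bundles with sections and apply Corollary~\ref{theorem:many-can-cro} (or feed the resulting collection of cancellations into Theorem~\ref{theorem:t-a-1}/\ref{theorem:t-g-1}), concluding that $X_3$ is stably b-inf. trans.

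Alternatively — and this may be cleaner — one can use Lemma~\ref{theorem:rel-con} directly: a single conic-bundle/$\p^1$-fibration $X_3\to S$ gives, on $X_3\times_S X_3 \approx X_3\times\fie^1$, two cancellations, and the product over the base of several such structures coming from different lines assembles the needed collection; then Theorem~\ref{theorem:t-g-1}'s hypotheses are checked exactly as in the Quotients subsection. Either route reduces the Proposition to the two tangent-space spanning facts about lines on a cubic plus the birational triviality of projection-from-a-line conic bundles.

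The main obstacle I expect is step (2): verifying that the conic bundle obtained by projecting from a general line is \emph{birationally trivial} (has a rational section), since a conic bundle need not admit a section in general. Here one must use the specific geometry — e.g. that the exceptional divisor of the blow-up of $\ell\subset X_3$ maps to the base $\p^{n-1}$ and provides a multisection which, combined with the group law on the residual conics or with a second distinguished point on each conic (the intersection of the conic with $\ell$ itself, suitably interpreted after blow-up), yields an honest section — or, failing that, replacing "section" by a degree-one multisection after a further birational base change, which still suffices for the cancellation statement since we only need birational triviality of the generic fiber $\p^1$. A secondary, more bookkeeping-type obstacle is checking that finitely many lines suffice for the tangent-space spanning condition simultaneously on one model, which follows from generic smoothness and the irreducibility of the relevant incidence varieties but should be stated carefully.
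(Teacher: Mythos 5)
Your proposal is correct and is essentially the paper's proof: project from a line $L\subset X_3$, blow up $L$ to obtain a conic bundle over $\p^{n-1}$, vary $L$ in the (two-dimensional) family of lines, and conclude via Lemma~\ref{theorem:rel-con} and Corollary~\ref{theorem:many-can-cro}. The ``main obstacle'' you flag in step (2) --- existence of a rational section of the conic bundle --- is exactly what Lemma~\ref{theorem:rel-con} is designed to bypass (the diagonal in $X\times_S X$ is a section for both projections), so your ``alternative, cleaner'' route is in fact the route the paper takes.
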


\begin{proof}
Smooth cubic $X_3$ contains a two-dimensional family of lines which span $\p^4$. Let $L \subset X_3$ be a line and $\pi : X_3 \dashrightarrow \p^{n
- 1}$ the linear projection from $L$. Let us resolve the
indeterminacies of $\pi$ by blowing up $X_3$ at $L$. We arrive at
a smooth variety $X_L$ together with a morphism $\pi_L : X_L \to
\p^{n - 1}$ whose general fiber is $\p^1$ ($\simeq$ a conic in
$\p^2$). Varying $L \subset X_3$, we then apply
Lemma~\ref{theorem:rel-con} and
Corollary~\ref{theorem:many-can-cro} to get that $X_3$ is stably
b-inf. trans.
\end{proof}

\refstepcounter{equation}
\subsection{Quartic hypersurfaces}

Let $X_4\subset\p^n$, $n \geq 4$, be a quartic hypersurface with a line $L\subset X_4$ of double singularities. Then

\begin{prop}
\label{theorem:sing-quart}
$X_4$ is stably b-inf. trans.
\end{prop}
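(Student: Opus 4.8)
The plan is to reduce the quartic $X_4\subset\p^n$ with a line $L$ of double points to a situation where Corollary~\ref{theorem:many-can-cro} applies, i.e., to exhibit on (a blow-up of) $X_4$ enough birational $\p^1$-bundle structures whose fibers span the tangent space generically. The key geometric observation is that a line of double singularities behaves like a point of higher multiplicity does on a cubic: projection from $L$ produces fibers that are low-degree rational curves. Concretely, first I would take the linear projection $\pi_L : X_4 \dashrightarrow \p^{n-2}$ from the line $L$. A general fiber of $\pi_L$ is the intersection of $X_4$ with a general plane $\Pi\cong\p^2$ containing $L$; this is a plane quartic curve which contains $L$ as a double point, hence its residual part is a conic (the quartic minus $L$ counted with the multiplicity $2$ coming from the double locus). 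More precisely, the quartic curve $X_4\cap\Pi$ has a double point at the point $\Pi\cap L$, so its normalization is a rational curve; better still, if we blow up $X_4$ along $L$ (and resolve) we get $X_L$ with a morphism $\pi_L:X_L\to\p^{n-2}$ whose general fiber is $\p^1$. So far this is exactly analogous to the cubic case, except the base has dimension $n-2$ rather than $n-1$.

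The next step is to produce \emph{many} such structures so that their fibers span the generic tangent space and then quote Corollary~\ref{theorem:many-can-cro}. For the cubic one varied the line $L$ inside the surface swept out by lines; here instead I would exploit that $X_4$ carries a whole family of such singular or rational projections, or — what is cleaner — use a single auxiliary construction: because $X_L\to\p^{n-2}$ is a conic bundle with a section (one can find a section since a general plane through $L$ meets $X_4$ in a conic through specified points, or after stabilizing by $\p^1$ as in Lemma~\ref{theorem:rel-con}), it is birationally $X_L\times\p^1\approx X_4\times\p^1$, which already gives $2$ cancellations from the left/right projections of the fiber product $X_L\times_{\p^{n-2}}X_L$. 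To get $\dim X_4 = n-1$ independent cancellations spanning the tangent space one combines this with the tangent directions along $L$ itself: blowing up $L$ introduces an exceptional $\p^{n-3}$-bundle over $L$, and one checks that moving $L$ in its family (a line of double points need not be unique, but in any case one can degenerate, or use the residual conic bundles in different directions) the fiber tangent vectors sweep out the full tangent space $T_{X_4,\zeta}$ at a generic point $\zeta$, by a dimension count: the conic through a general point in a general plane of the pencil, as the plane rotates about $L$, moves $\zeta$ in $n-2$ independent directions, and one more direction is supplied by a transversal $\p^1$-bundle structure obtained from a second projection.

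With the spanning condition in hand, Corollary~\ref{theorem:many-can-cro} (applied after multiplying by a suitable affine space to make all the $\p^{m_i}$-bundles yield $2m_i$ cancellations, cf. Lemma~\ref{theorem:rel-con}) immediately gives that $X_4$ is stably b-inf. trans. The main obstacle, and the step I would spend the most care on, is verifying the spanning hypothesis \ref{c-2} of Theorem~\ref{theorem:t-g-1} / the hypothesis of Corollary~\ref{theorem:many-can-cro}: one must confirm that the conic fibers of $\pi_L$ (together with whatever second family one uses) genuinely span $T_{X_4,\zeta}$ at the generic point, rather than being confined to a proper subbundle of the tangent bundle — this is a local computation with the plane pencil through $L$ and the residual conics, and it is where the double-point hypothesis on $L$ is essential, since it is precisely the double point that forces the residual curve to be a conic rather than a cubic, keeping the fibers rational. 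A secondary technical point is the existence of rational sections for these conic bundles, which is handled either directly (a conic bundle with a $\p^1$ base-change, or with a bisection that splits after stabilization) or by the stabilization trick of Lemma~\ref{theorem:rel-con}, passing to $X_4\times\p^1$ as needed.
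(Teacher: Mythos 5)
Your starting point is right: projecting from the double line $L$ does give a conic bundle structure, because a general $2$-plane through $L$ cuts $X_4$ in a quartic curve containing $L$ with multiplicity two, leaving a residual conic. But there is a genuine gap exactly where you flag it yourself: a single conic bundle (even after forming the fiber product as in Lemma~\ref{theorem:rel-con}) only supplies fiber directions confined to a low-dimensional subbundle of the tangent bundle, and to invoke Corollary~\ref{theorem:many-can-cro} you must produce a \emph{family} of such structures whose fibers span $T_{X_4,\zeta}$ at the generic point. In the cubic case this came from varying $L$ in the two-dimensional family of lines on $X_3$; here the line of double points may well be unique, and your suggestions for a substitute (``moving $L$ in its family'', ``degenerate'', ``a second projection'') are not constructions — no second fibration is actually exhibited, so the spanning hypothesis is never verified.

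The missing idea in the paper is to pass to the cone $\mathfrak{X}_4\subset\p^{n+1}$ over $X_4$. The cone over the double line $L$ is a whole plane $\Pi$ of double singularities of $\mathfrak{X}_4$, and now one can project from a \emph{generic line} $L'\subset\Pi$; each choice of $L'$ gives a conic bundle structure $\mathfrak{X}_4\dashrightarrow\p^{n-1}$ exactly as in Proposition~\ref{theorem:cubic}, and varying $L'$ inside the two-dimensional family of lines of $\Pi$ produces enough independent fibrations to satisfy the spanning condition and conclude that $\mathfrak{X}_4$ is stably b-inf.\ trans. Since $\mathfrak{X}_4\approx X_4\times\fie$, and stable b-infinite transitivity is insensitive to multiplying by an affine line, the statement for $X_4$ follows. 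So the cone is not merely a stabilization device here: it is what converts the single distinguished center of projection into a positive-dimensional family of centers, which is the whole point of the argument.
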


\begin{proof}
Consider the cone $\mathfrak{X}_4\subset\p^{n+1}$ over $X_4$. Then
$\mathfrak{X}_4$ contains a plane $\Pi$ of double singularities. Pick
up a (generic) line $L'\subset\Pi$ and consider the linear
projection $\mathfrak{X}_4\dashrightarrow \p^{n-1}$ from $L'$. This
induces a conic bundle structure on $\mathfrak{X}_4$, similarly as in
the proof of Proposition~\ref{theorem:cubic}, and varying $L'$ in
$\Pi$ as above we obtain that $\mathfrak{X}_4$ is stably b-inf. trans.
Then, since  $\mathfrak{X}_4 \approx X_4 \times \fie$,
Proposition~\ref{theorem:sing-quart} follows.
\end{proof}

\refstepcounter{equation}
\subsection{Complete intersections}

Let $X_{2\cdot 2\cdot 2} \subset \p^6$ be the smooth complete intersection of three quadrics. Then

\begin{prop}
\label{theorem:3-quadrics-int}
$X_{2\cdot 2\cdot 2}$ is stably b-inf. trans.
\end{prop}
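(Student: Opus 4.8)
The plan is to follow the strategy of the proofs of Propositions~\ref{theorem:cubic} and~\ref{theorem:sing-quart}: exhibit on $X:=X_{2\cdot 2\cdot 2}$, or on a birational model of $X\times\fie^{r}$, a family of birational $\p^1$-fibration structures whose general fibres span the tangent space at a general point, and then invoke Corollary~\ref{theorem:many-can-cro} (equivalently Theorem~\ref{theorem:t-g-1}). The starting datum is the $1$-dimensional family of lines $\{\ell_t\}_{t\in T}$ lying on $X$: a parameter count on $\mathrm{Gr}(2,7)$ gives $\dim T=10-3\cdot 3=1$, since each line imposes $\dim H^0(\p^1,\hol(2))=3$ conditions on each of the three quadrics; the same count shows that a general $X$ contains no plane, as $12-3\cdot 6<0$.

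For a fixed line $\ell=\ell_t\subset X$ I would blow it up and use the residual intersection of the net of quadrics to manufacture a fibration $\pi_t\colon X_t\to S_t$ with rational general fibre. The two devices of the earlier cases do not apply verbatim here: the linear projection of $X$ from $\ell$ is only generically finite, so there is no conic bundle on $X$ itself; and $X$, being smooth and containing no plane, has no line of double points to project from as in Proposition~\ref{theorem:sing-quart}. Hence the construction should first pass to a good model: after a stabilization, i.e.\ replacing $X$ by a smooth model of $X\times\p^{r}$, one applies the rank-increasing constructions of Sections~\ref{incr-t}--\ref{sec:incr-rank} (Lemmas~\ref{rank-sg} and~\ref{rank-m}) to the blown-up lines, so as to obtain a full collection of $\p^1$-fibrations $\pi_t$ with the Picard/homology independence required by condition~\ref{i-2}. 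Observe that no separate birational-triviality statement for the $\pi_t$ is needed: each $\pi_t$ supplies $2\dim S_t$ algebraically independent cancellations through the fibre square $X_t\times_{S_t}X_t$, by Lemma~\ref{theorem:rel-con}.

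It then remains to verify the spanning condition~\ref{c-2} of Theorem~\ref{theorem:t-g-1}: as $t$ ranges over $T$, the tangent lines of the fibres of the $\pi_t$ at a general point $\zeta\in X$ must fill $T_{X,\zeta}$. This should follow from the abundance of lines on $X$ together with the above construction, exactly as the corresponding span assertions were checked in Propositions~\ref{theorem:cubic} and~\ref{theorem:sing-quart}; granting it, Corollary~\ref{theorem:many-can-cro} yields that $X_{2\cdot 2\cdot 2}$ is stably b-inf.\ trans.

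The hard part is the first step — actually producing the family of rational-curve fibrations. Because $X_{2\cdot 2\cdot 2}$ is smooth and its only linear subvarieties are lines, neither projection from a line (generically finite here) nor the cone-with-a-plane-of-double-points trick of Proposition~\ref{theorem:sing-quart} works directly, and one is forced to combine the one-parameter family of lines on $X$ with a stabilization and the rank-increasing machinery; checking that the resulting fibres then span the tangent space is the second, more routine, point.
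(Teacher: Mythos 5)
There is a genuine gap, and it sits exactly where you flag it yourself: you never actually produce the rational-curve fibrations, and everything else in your argument is conditional on that step. The paper's proof is short precisely because this step is a classical fact which you have talked yourself out of: if $L\subset X_{2\cdot2\cdot2}$ is a line and $X_L\to X_{2\cdot2\cdot2}$ is the blowup of $L$, then $X_L$ \emph{does} carry a conic bundle structure (the paper cites \cite[Ch.\,10, Example 10.1.2,\,(ii)]{isk-prok}; this is Beauville's example with discriminant curve of degree $7$). Your objection --- that the linear projection from $L$ is generically finite on $X$ --- is correct as far as it goes, but the conclusion ``hence there is no conic bundle'' does not follow: the conic bundle is not the naive projection $\p^6\dashrightarrow\p^4$. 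One way to see it: for general $x\in X\setminus L$ the plane $\Pi_x=\langle L,x\rangle$ is contained in a unique quadric $Q_{\lambda(x)}$ of the net (the linear map $\lambda\mapsto(\text{residual line of }\Pi_x\cap Q_\lambda)$ from the net to the pencil of lines through $x$ in $\Pi_x$ has a one-dimensional kernel), and $x\mapsto\lambda(x)$ defines a morphism $X_L\to\p^2$ onto the net whose fiber over $\lambda$ is parametrized by the conic of planes $\Pi$ with $L\subset\Pi\subset Q_\lambda$. With this in hand the paper simply varies $L$ over the one-dimensional family of lines and applies Lemma~\ref{theorem:rel-con} and Corollary~\ref{theorem:many-can-cro}, exactly as for the cubic.

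A secondary problem is that you ask the stabilization and rank-increasing machinery of Sections~\ref{incr-t}--\ref{sec:incr-rank} to do work it cannot do: Lemmas~\ref{rank-sg} and~\ref{rank-m} take an \emph{already existing} collection of $\p^1$-fibrations and improve the independence of their fiber classes (condition~\ref{i-2}); they do not manufacture fibrations out of a family of lines. So the fallback route you sketch does not close the gap either. Once the conic bundle structure is established, the rest of your outline (fiber squares giving cancellations via Lemma~\ref{theorem:rel-con}, the spanning condition~\ref{c-2} from varying $L$, then Corollary~\ref{theorem:many-can-cro}) matches the paper's argument.
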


\begin{proof}
The threefold $X_{2\cdot 2\cdot 2}$ contains at least a one-dimensional family of lines. Let $L \subset X_{2\cdot 2\cdot 2}$ be a line and $X_L \to X_{2\cdot 2\cdot 2}$ the blowup of $L$.
Then the threefold $X_L$ carries the structure of a conic bundle (see \cite[Ch.\,10, Example 10.1.2,\,(ii)]{isk-prok}).
Now, varying $L$ and applying the same arguments as in the proof of Proposition~\ref{theorem:cubic},
we obtain that $X_{2\cdot 2\cdot 2}$ is stably b-inf. trans.
\end{proof}

\begin{oss}
\label{remark:other-ci} Fix $n, r \in\na$, $n \gg r$, and a
sequence of integers $0<d_1\leq \ldots \leq d_m$, $m \geq 2$. Let
us assume that $(n-r)(r+1)\geq \summ_{i=1}^m{d_i+r\choose r}$.
Consider the complete intersection $X:=H_1\cap\ldots\cap H_m$ of
hypersurfaces $H_i\subset\p^n$ of degree $d_i$. Then it follows
from the arguments in \cite{kollar-and-co} that $X$ contains a
positive dimensional family of linear subspaces $\simeq\p^r$.
Moreover, $X$ is unirational, provided $X$ is generic. It would be
interesting to adopt the arguments from the proofs of
Propositions~\ref{theorem:cubic}, \ref{theorem:sing-quart} and
\ref{theorem:3-quadrics-int} to this more general setting in order
to prove that $X$ is stably b-inf. trans.
\end{oss}

\begin{oss}
Propositions~\ref{theorem:cubic}, \ref{theorem:sing-quart} and
\ref{theorem:3-quadrics-int} (cf. Remark~\ref{remark:other-ci})
provide an alternative method of proving unirationality of
complete intersections  (see \cite[Ch.\,10]{isk-prok} for
recollection of classical arguments). Note also that (generic)
$X_{2\cdot 2\cdot 2}$ is non-rational (see for example
\cite{tyurin}), and (non-)rationality of the most of other
complete intersections considered above is not known. At the same
time, verifying stable b-inf. trans. property of other
(non-rational) Fano manifolds (cf. \cite[Ch.\,10, Examples
10.1.3,\,(ii),\,(iii),\,(iv)]{isk-prok}) is out of reach for our
techniques at the moment.
\end{oss}

\bigskip

\end{document}